\newtheorem{thm}{Theorem} [section]
\newtheorem{lem}[thm]{Lemma}
\newtheorem{prop}[thm]{Proposition}
\theoremstyle{definition}
\newtheorem{definition}[thm]{Definition}
\theoremstyle{remark}
\newtheorem{rem}[thm]{Remark}
\numberwithin{equation}{section}
\begin{document}

\newcommand{\thmref}[1]{Theorem~\ref{#1}}
\newcommand{\secref}[1]{Section~\ref{#1}}
\newcommand{\lemref}[1]{Lemma~\ref{#1}}
\newcommand{\propref}[1]{Proposition~\ref{#1}}
\newcommand{\corref}[1]{Corollary~\ref{#1}}
\newcommand{\remref}[1]{Remark~\ref{#1}}
\newcommand{\eqnref}[1]{(\ref{#1})}

\newcommand{\exref}[1]{Example~\ref{#1}}

 \newcommand{\GSp}{\mathrm{GSp}}
 \newcommand{\PGSp}{\mathrm{PGSp}}
\newcommand{\PGSO}{\mathrm{PGSO}}
\newcommand{\PGO}{\mathrm{PGO}}
\newcommand{\SO}{\mathrm{SO}}
\newcommand{\GO}{\mathrm{GO}}
\newcommand{\GSO}{\mathrm{GSO}}
\newcommand{\Spin}{\mathrm{Spin}}
\newcommand{\Sp}{\mathrm{Sp}}
\newcommand{\PGL}{\mathrm{PGL}}
\newcommand{\GL}{\mathrm{GL}}
\newcommand{\SL}{\mathrm{SL}}
\newcommand{\U}{\mathrm{U}}
\newcommand{\ind}{\mathrm{ind}}
\newcommand{\Ind}{\mathrm{Ind}}
\newcommand{\im}{\mathrm{im}}
\renewcommand{\ker}{\mathrm{ker}}
 \newcommand{\triv}{\mathrm{triv}}
  \newcommand{\Tr}{\mathrm{Tr}}
  \renewcommand{\S}{\mathscr{S}}
  \newcommand{\Y}{\mathbb{Y}}
\newcommand{\End}{\mathrm{End}}
\newcommand{\bigzero}{\mbox{\normalfont\Large\bfseries 0}}

\newtheorem{innercustomthm}{{\bf Theorem}}
\newenvironment{customthm}[1]
  {\renewcommand\theinnercustomthm{#1}\innercustomthm}
  {\endinnercustomthm}
  
  \newtheorem{innercustomcor}{{\bf Corollary}}
\newenvironment{customcor}[1]
  {\renewcommand\theinnercustomcor{#1}\innercustomcor}
  {\endinnercustomthm}
  
  \newtheorem{innercustomprop}{{\bf Proposition}}
\newenvironment{customprop}[1]
  {\renewcommand\theinnercustomprop{#1}\innercustomprop}
  {\endinnercustomthm}

\newcommand{\bbinom}[2]{\begin{bmatrix}#1 \\ #2\end{bmatrix}}
\newcommand{\cbinom}[2]{\set{\^!\^!\^!\begin{array}{c} #1 \\ #2\end{array}\^!\^!\^!}}
\newcommand{\abinom}[2]{\ang{\^!\^!\^!\begin{array}{c} #1 \\ #2\end{array}\^!\^!\^!}}
\newcommand{\qfact}[1]{[#1]^^!}

\newcommand{\nc}{\newcommand}

\nc{\Ord}{\text{Ord}_v}

 \nc{\A}{\mathbb A} 
  \nc{\G}{\mathbb G} 
\nc{\Ainv}{\A^{\rm inv}}
\nc{\aA}{{}_\A}
\nc{\aAp}{{}_\A'}
\nc{\aff}{{}_\A\f}
\nc{\aL}{{}_\A L}
\nc{\aM}{{}_\A M}
\nc{\Bin}{B_i^{(n)}}
\nc{\dL}{{}^\omega L}
\nc{\Z}{{\mathbb Z}}
 \nc{\C}{{\mathbb C}}
 \nc{\N}{{\mathbb N}}
 \nc{\R}{{\mathbb R}}
 \nc{\fZ}{{\mf Z}}
 \nc{\F}{{\mf F}}
 \nc{\Q}{\mathbb{Q}}
 \nc{\la}{\lambda}
 \nc{\ep}{\epsilon}
 \nc{\h}{\mathfrak h}
 \nc{\He}{\bold{H}}
 \nc{\htt}{\text{tr }}
 \nc{\n}{\mf n}
 \nc{\g}{{\mathfrak g}}
 \nc{\DG}{\widetilde{\mathfrak g}}
 \nc{\SG}{\breve{\mathfrak g}}
 \nc{\is}{{\mathbf i}}
 \nc{\V}{\mf V}
 \nc{\bi}{\bibitem}
 \nc{\E}{\mc E}
 \nc{\ba}{\tilde{\pa}}
 \nc{\half}{\frac{1}{2}}
 \nc{\hgt}{\text{ht}}
 \nc{\ka}{\kappa}
 \nc{\mc}{\mathcal}
 \nc{\mf}{\mathfrak} 
 \nc{\hf}{\frac{1}{2}}
\nc{\ov}{\overline}
\nc{\ul}{\underline}
\nc{\I}{\mathbb{I}}
\nc{\xx}{{\mf x}}
\nc{\id}{\text{id}}
\nc{\one}{\bold{1}}
\nc{\mfsl}{\mf{sl}}
\nc{\mfgl}{\mf{gl}}
\nc{\ti}[1]{\textit{#1}}
\nc{\Hom}{\mathrm{Hom}}
\nc{\Irr}{\mathrm{Irr}}
\nc{\Cat}{\mathscr{C}}
\nc{\CatO}{\mathscr{O}}
\renewcommand{\O}{\mathrm{O}}
\nc{\Tan}{\mathscr{T}}
\nc{\Umod}{\mathscr{U}}
\nc{\Func}{\mathscr{F}}
\nc{\Kh}{\text{Kh}}
\nc{\Khb}[1]{\llbracket #1 \rrbracket}

\nc{\ua}{\mf{u}}
\nc{\nb}{u}
\nc{\inv}{\theta}
\nc{\mA}{\mathcal{A}}
\newcommand{\TT}{\mathbf T}
\newcommand{\TA}{{}_\A{\TT}}
\newcommand{\tK}{\widetilde{K}}
\newcommand{\al}{\alpha}
\newcommand{\Fr}{\bold{Fr}}

\nc{\Qq}{\Q(v)}
\nc{\uu}{\mathfrak{u}}
\nc{\Udot}{\dot{\U}}

\nc{\f}{\bold{f}}
\nc{\fprime}{\bold{'f}}
\nc{\B}{\bold{B}}
\nc{\Bdot}{\dot{\B}}
\nc{\Dupsilon}{\Upsilon^{\vartriangle}}
\newcommand{\T}{\texttt T}
\newcommand{\vs}{\varsigma}
\newcommand{\Pa}{{\bf{P}}}
\newcommand{\Padot}{\dot{\bf{P}}}

\nc{\ipsi}{\psi_{\imath}}
\nc{\Ui}{{\bold{U}^{\imath}}}
\nc{\uidot}{\dot{\mathfrak{u}}^{\imath}}
\nc{\Uidot}{\dot{\bold{U}}^{\imath}}
 \nc{\be}{e}
 \nc{\bff}{f}
 \nc{\bk}{k}
 \nc{\bt}{t}
 \nc{\bs}{\backslash}
 \nc{\BLambda}{{\Lambda_{\inv}}}
\nc{\Ktilde}{\widetilde{K}}
\nc{\bktilde}{\widetilde{k}}
\nc{\Yi}{Y^{w_0}}
\nc{\bunlambda}{\Lambda^\imath}
\newcommand{\Iwhite}{\I_{\circ}}
\nc{\ile}{\le_\imath}
\nc{\il}{<_{\imath}}

\newcommand{\ff}{B}


\nc{\etab}{\eta^{\bullet}}
\newcommand{\Iblack}{\I_{\bullet}}
\newcommand{\wb}{w_\bullet}
\newcommand{\UIblack}{\U_{\Iblack}}

\newcommand{\blue}[1]{{\color{blue}#1}}
\newcommand{\red}[1]{{\color{red}#1}}
\newcommand{\green}[1]{{\color{green}#1}}
\newcommand{\white}[1]{{\color{white}#1}}

\newcommand{\dvd}[1]{t_{\odd}^{{(#1)}}}
\newcommand{\dvp}[1]{t_{\ev}^{{(#1)}}}
\newcommand{\ev}{\mathrm{ev}}
\newcommand{\odd}{\mathrm{odd}}

\newcommand\TikCircle[1][2.5]{{\mathop{\tikz[baseline=-#1]{\draw[thick](0,0)circle[radius=#1mm];}}}}

\newcommand{\commentcustom}[1]{}

\raggedbottom

\title[Global theta and periods]
{Global theta lifting and automorphic periods associated to nilpotent orbits}

\author{Bryan Wang Peng Jun}
\address{Department of Mathematics, National University of Singapore, 10 Lower Kent Ridge Road, Singapore 119076}
\email{bwang@u.nus.edu}

\begin{abstract}
A systematic way to organise the interesting periods of automorphic forms on a reductive group $G$ is via the theory of nilpotent orbits of $G$. On the other hand, it is known that the theta correspondence can be used effectively to relate automorphic periods on each member of a dual pair. In this paper, we establish this relation in full generality, facilitated by a certain transfer of nilpotent orbits via moment maps. This is the analogous global result to the local result previously established by Gomez and Zhu. 
\end{abstract}

\maketitle

\setcounter{tocdepth}{1}
\tableofcontents

\section{Introduction}\label{sec:Introduction}

Period integrals have long been of key interest in the study of automorphic forms, and more specifically also in the Langlands program. For instance, one of the central problems in the relative Langlands program is to characterize the non-vanishing of certain periods of automorphic forms as the image of certain Langlands functorial liftings, or by analytic properties of automorphic $L$-functions.

A systematic way to organise the possible periods that one is interested in is via the (Dynkin-Kostant) theory of nilpotent orbits for a reductive group $G$. For an introduction, one may refer to the notes of Jiang \cite{Ji}. Much more recently, the work of Ben-Zvi, Sakellaridis and Venkatesh on the relative Langlands program \cite{BZSV} has led to a far-reaching framework for the study of periods of automorphic forms, indexed by nilpotent orbits of $G$ (among other data).

One of the main tools used to approach such problems involving automorphic periods is the theory of theta correspondence (the other being the relative trace formula). It is a known principle that theta correspondence often relates two periods on each member of a reductive dual pair. As such, the theta correspondence can often be used to great effect to establish instances of the characterization of non-vanishing of periods. 

A natural question which arises is the following. Suppose $(G,G')$ is a reductive dual pair, and theta correspondence relates an automorphic period on $G$ corresponding to a nilpotent orbit $\gamma$ of $G$, with an automorphic period on $G'$ corresponding to a nilpotent orbit $\gamma'$ of $G'$. Then can one understand the possible relations between $\gamma$ and $\gamma'$, so that one has a better understanding of the possible periods that can be related by the theta correspondence?

This was studied in the analogous local setting by Gomez and Zhu \cite{GZ}, where they showed that $\gamma$ and $\gamma'$ can be understood as being related by a certain natural transfer of nilpotent orbits via moment maps (Section \ref{NilpOrbitTransfer}). It was further shown in \cite{GW} that this moment map transfer has natural interpretations in the \cite{BZSV}-framework indexing period problems by Hamiltonian spaces (which come equipped with moment maps). 

\vskip 5pt

The aim of this paper is to establish the analogous result of \cite{GZ} (and its generalisation as given in \cite{Zh}), in the global setting. Suppose $\gamma$ and $\gamma'$ are related by the moment map transfer of nilpotent orbits (as in Section \ref{NilpOrbitTransfer}). Then we explicate the relation between the $\gamma$-period on a (cuspidal) automorphic representation $\pi$ of $G$, and the $\gamma'$-period of its global theta lifting $\Theta(\pi)$ of $G'$ (Theorem \ref{thm:KeyResult}). This allows one to characterize the non-vanishing of the $\gamma$-period on $\pi$ in terms of the non-vanishing of the $\gamma'$-period on $\Theta(\pi)$ (and vice versa). 

We leave the formulation of the precise statements to the main body of the paper, with the main results summarised in Theorems \ref{KeyTheoremNotInImage}, \ref{KeyTheorem} and \ref{KeyTheoremConverse}. The paper is organised as follows. We set up the necessary notation and preliminaries in Section \ref{sec:Notation}. We then compute the period integral of the relevant theta function in Section \ref{sec:KeyComputation}, which is the heart of the computation. We complete the computation and obtain our main results in Section \ref{sec:RelatePeriods}. Finally, we illustrate some examples in Section \ref{sec:Examples}.

\vskip 5pt

Let us finally remark that such global computations of periods of global theta lifts have been done multiple times in the literature, all in various special cases, and it would be impossible to give an exhaustive list of references here. We mention here only three such examples (again, there are many others). 

\begin{itemize}
    \item One is in \cite{GRS}, where the case corresponding, in the notation of this paper, to $(G,G')=(\Sp_{2n},\O_{2n})$, $\gamma$ with partition $[2n-2,1,1]$, and $\gamma'$ with partition $[2n-1,1]$ (the regular orbit), was considered (as well as several other similar cases).
    \item Another is in \cite[Section 8]{Xu}, where the case with $(G,G')=(\O_{2n+2},\Sp_{2n+2})$, $\gamma$ with partition $[1^{2n+2}]$ (the trivial orbit), and $\gamma'$ with partition $[2,1^{2n}]$ was considered.
    \item Yet another example still is in \cite{BM}, where essentially the case with $(G,G')=(\O_{m+n}, \Sp_{2m})$, $\gamma$ with partition $[1^{m+n}]$ (the trivial orbit), and $\gamma'$ with partition $[2^m]$\footnote{Note that there is a difference in terminology, as also noted in \cite{BM}; in \cite{BM}, this case is referred to as a Bessel period, whereas we would refer to this case as a Shalika period.} was considered. 
\end{itemize}  

The main contribution of this paper is therefore to show how such a global computation can be performed in essentially full generality, as facilitated by the moment map transfer of nilpotent orbits. 

\subsection{Acknowledgements} The author thanks his undergraduate advisor Wee Teck Gan, not only for suggesting this problem and for many helpful comments on the paper, but also for his constant guidance and support. The author also thanks Chengbo Zhu for his helpful comments on the paper, and Nhat Hoang Le for several helpful discussions on related topics. 

\vskip 5pt

\section{Notation and preliminaries}\label{sec:Notation}

Let $k$ be a number field with ring of adeles $\A$, and fix a non-trivial unitary additive character $\psi = \otimes_v \psi_v : k\bs \A \rightarrow \C^\times$. In particular $\overline{\psi}=\psi^{-1}$. Since $\psi$ is fixed, in what follows, wherever $\psi$ is omitted from the notation, then we understand that we are working with this fixed $\psi$, so as to avoid excessive notation. For any algebraic group $G$ over $k$, we denote as usual $[G]:= G(k)\bs G(\A)$. All measures on adelic groups are taken to be the usual Tamagawa measure. 

\subsection{Reductive dual pairs and global theta lift}\label{DualPairs}

Let $(G',G)$ be a type I reductive dual pair; in this paper, we will take $G',G$ to be the isometry groups of an orthogonal vector space $(V',B')$ and a symplectic vector space $(V,B)$ (or vice versa)\footnote{One may also work with the unitary or quaternionic dual pairs associated to a pair of $\epsilon$-Hermitian vector spaces over a quadratic extension or quaternion algebra over $k$, with only technical changes needed to the arguments that follow. To streamline the exposition, we focus in this paper on the case of symplectic-orthogonal dual pairs.}. If $\dim V'$ is odd, then in what follows we understand that we will have to work with the metaplectic group ${\rm Mp}(V)$ instead of $\Sp(V)$ (and similar for all other reductive dual pairs which occur in this paper, to avoid excessive notation). 

From the character $\psi$, one has the global Weil representation \[ \omega_\psi = \otimes'_v \omega_{\psi_v} \] which is realised on the space of Schwarz functions \[ \S(Y_\A) = \otimes'_v \S(Y_v), \] for $Y$ a Lagrangian subspace of the symplectic vector space $\Hom(V,V')$. 

\begin{rem}\label{rem:DualVectorSpace}

It is perhaps more common to work with the symplectic vector space $V\otimes V'$ when dealing with the Weil representation. However, we may always identify $V\otimes V'$ with $\Hom(V,V')$ via the form $B$. 

    To be precise, in what follows, we will always implicitly identify $V$ with its linear dual $V^\ast$ using the form $B$ (via $v\mapsto B(v,\cdot)$) without further comment. Thus, for instance, given a linear map $f\in \Hom(V,V')$, its adjoint $f^\ast$ is an element of $\Hom(V',V)$. 

\end{rem}

As usual, for $\phi\in \S(Y_\A)$, one has the formation of theta series \[ \theta(\phi)(g) = \sum_{y\in Y_k} (\omega_\psi(g)\phi)(y). \] 

For a cuspidal automorphic representation $\pi$ of $G$, one has the global theta lift $\Theta(\pi)$ of $G'$, which is spanned by \[ \theta(\phi, f)(g') := \int_{[G]} \theta(\phi)(gg')\overline{f(g)} \mathop{dg} \] for $\phi\in \S(Y_\A)$  and $f\in \pi$. 

\subsection{Nilpotent orbits}\label{NilpOrbits} 

In this section, we set up the notation and preliminaries concerning either only the group $G$ or the group $G'$. In the subsequent sections, we consider the notation and preliminaries which involve both $G$ and $G'$.

In the following we will focus on the group $G$; similar definitions are made for the group $G'$ and notated subsequently by the occurrence of $^\prime$. In this paper, we will enforce religiously the use of $^\prime$ for all objects and variables related to the group $G'$.

Work first over the number field $k$ or one of its completions $k_v$. Let $\gamma=\{e,h,f\}\subset \g$ be an $\mfsl_{2}$-triple associated to a nilpotent orbit of $\g$. 

\begin{rem}
By the Jacobson-Morozov theorem (and other results of Kostant) \cite{CM}, there is a correspondence between (conjugacy classes of) $\mfsl_2$-triples and nilpotent orbits; as such, in this paper, we will often refer to $\mfsl_2$-triples and their corresponding nilpotent orbits interchangeably, with no confusion to be expected for the reader. 
\end{rem}

Under the adjoint $\mfsl_2$-action, $\g$ decomposes into $\mfsl_2$ weightspaces \[\g_j=\{v\in \g \mid {\rm ad}(h)v=jv\}\] for $j\in \Z$. We have the parabolic \[\mf p=\oplus_{j\le 0} \g_{j} = \mf m \oplus \mf u,\] where $\mf m = \g_0$. Set $\mf u^+:=\oplus_{j\leq -2} \g_{j}$.

We get corresponding subgroups $P=M\ltimes U$ and $U^+$  of $G$. Note that \[ M=\{m\in G \mid \mbox{${\rm Ad}(m)h=h$}\}\] is the centraliser of $h$. Denote the centraliser of $\gamma$ by \[M_\gamma=\{m\in M \mid \mbox{${\rm Ad}(m)e=e$}\}=\{g\in G \mid \mbox{${\rm Ad}(g)e=e$, ${\rm Ad}(g)f=f$, ${\rm Ad}(g)h=h$}\},\] which is reductive. 

Fix $\kappa$, an ${\rm Ad}(G)$-invariant non-degenerate bilinear form on $\g$. For each place $v$ of $k$, from the character $\psi_v$, we have a character $\chi_{\gamma,\psi_v}$ on $U^+$ via 
\begin{equation}
\label{defchi}
\chi_{\gamma,\psi_v}(\exp u):=\psi_v (\kappa(e,u)), \ \ \ \ \forall \ u\in \mf u^+.
\end{equation}
Denote also \[ \kappa_e(u) := \kappa(e,u). \] 

\vskip 5pt

In the adelic setting, we thus have also a character $\chi_{\gamma,\psi}$ on $U^+(k) \bs U^+(\A)$, beginning with a nilpotent orbit $\gamma$ over $k$.  

\vskip 5pt

From now on, suppose $G=G(V)$ is a classical group that we are dealing with in this paper, as in the previous Section \ref{DualPairs}. Let us fix (the normalisation of) $\kappa_e$ as being equal to $u\mapsto \frac{1}{2}\Tr(eu)$. Here we identify $\g$ as a subalgebra of $\End(V)$, and the trace is taken as linear maps from $V$ to itself.

\subsubsection{Classification of nilpotent orbits for classical groups}\label{NilpPartition} 

Continue to suppose $G=G(V)$. 

From an $\mfsl_2$-triple as above, we obtain an $\mfsl_2$-representation on $V$ and the decomposition \[ V=\oplus_{j=1}^l W_j^{\oplus a_j}, \] where \[ W_j^{\oplus a_j} \cong W_j \otimes V^j \] is the isotypic component of $V$ for the irreducible $j$-dimensional representation $W_j$ of $\mfsl_2$, and $V^j=\Hom(W_j,V)$ is a $a_j$-dimensional multiplicity space. (We use the superscript ${}^j$ to emphasise that this is a multiplicity space, and to distinguish it from $V_j$, which will occur later.)

Recall, from standard $\mfsl_2$-theory, that $W_j$ is symplectic (resp. orthogonal) if $j$ is even (resp. odd); fix corresponding $\mfsl_2$-invariant forms $A_j$ on $W_j$.

The form $B$ induces a symplectic or orthogonal form $B_j$ on the $a_j$-dimensional multiplicity spaces $V^j$. Moreover, $B_j$ is symplectic if $j$ is even and $B$ is orthogonal or if $j$ is odd and $B$ is symplectic, and otherwise $B_j$ is orthogonal. 

The centraliser $M_\gamma$ is in fact (canonically isomorphic to) the direct product of the isometry groups of $(V^j,B_j)$; we denote \[ M_\gamma \cong \prod_{j=1}^l G(V^j,B_j). \] 

The above furnishes a parameterisation of the nilpotent orbits in $G$, by the datum of:
\begin{itemize}
    \item the partition $\lambda=[l^{a_l},\dots, 1^{a_1}]$ of $n$, and
    \item the forms on the multiplicity spaces $(V^j,B_j)$,
\end{itemize} 
such that the $(V^j,B_j)$ are compatible with $B$ in the above way; to be precise, this means that the $B_j$ must be the forms that would be induced from $B$ as above, or that \[ \bigoplus_j (V^j, B_j) \otimes (W_j, A_j) \cong (V, B). \]

In particular if $G$ is an orthogonal group, then even parts must occur with even multiplicity in $\lambda$, and if $G$ is symplectic, then odd parts must occur with even multiplicity in $\lambda$. 

\subsubsection{Weight space decomposition}\label{WeightSpace}

Continuing with the $\mfsl_2$-representation on $V$, we have the $\mfsl_2$-weight space decomposition \[ V = V_{-r}\oplus\cdots\oplus V_{r}. \] 

Observe that by $\mfsl_2$-theory, $B$ restricts to a perfect pairing between $V_m$ and $V_{-m}$ for each $1\le m\le r$. Hence, for instance, as in Remark \ref{rem:DualVectorSpace}, we will identify $V_m^\ast = V_{-m}$. Moreover, for each $k\ne 0$, $V_k$ is an isotropic subspace. 

Finally, note that if we define a flag $(\overline{V}_t)$ of subspaces of $V$ by \[ \overline{V}_t := \bigoplus_{j=-r}^t V_j, \] then the parabolic $P=MU$ is in fact the stabiliser of this flag. 

In what follows, we will use brackets in the subscript to denote, for example, for $0\le m\le r$, \[ V_{(m)} := V_{-m}\oplus\cdots\oplus V_m, \] and $G_{(m)} = G(V_{(m)})$, the isometry group of $V_{(m)}$.

For $1\le m\le r$, denote by $P_m$ the stabiliser of $V_{-m}$ in $G_{(m)}$; it is a parabolic subgroup of $G_{(m)}$ with $P_m = M_{(m)} U_m$, where $M_{(m)} = M_m \times G_{(m-1)}$ with $M_m\cong \GL(V_{-m})$. If $U_{(m)} := U \cap G_{(m)}$, then one has \[ U_{(m)}=\big( (U_1\ltimes U_2)\ltimes \cdots \big) \ltimes U_m. \]

Denote $U^{+}_m = U^{+} \cap U_m$. Similarly, if $U^{+}_{(m)} := U^{+} \cap G_{(m)}$, then one has $U^{+}_{(m)} = U^{+}_1 \cdots U^{+}_{m}$. 

One may define $\chi_{\gamma,m} = \chi_\gamma |_{U^{+}_m}$, and then clearly $\chi_\gamma = \chi_{\gamma,1}\cdots \chi_{\gamma,r}$.

\begin{figure}[!h]
\begin{center}
\renewcommand{\arraystretch}{6}
\[ \left(\begin{array}{@{}c|c|c@{}}
  ^t m_r^{-1}
  & \bigzero & \bigzero  \\
\hline
  -q^\ast &
  G_{(r-1)} &\bigzero \\  \hline
  z\in \Hom(V_r, V_{-r}) & q\in \Hom(V_{(r-1)},V_{-r}) & m_r \in \GL(V_{-r})
\end{array}\right) \]
\end{center} 
\vspace{1em}
\caption{A matrix representation of elements of $G=G_{(r)}$, with a chosen ordered basis of $V$ corresponding to the weight space decomposition, in order of decreasing weight. }\label{fig:MatrixRep}
\end{figure}

For each $m$, the Lie algebra $\mf{u}_m$ has a vector space decomposition \[ \mf{u}_{m} \cong \mf{q}_{m} \oplus \mf{z}_{m} \cong \Hom(V_{(m-1)},V_{-m}) \oplus \mf{z}_{m}.  \] Here $\mf{z}_{m}$ is a Lie subalgebra and is central in $\mf{u}_m$, and  \[ \mf{z}_m = \{ z\in \Hom(V_m, V_{-m}) | z^\ast = -z\} \subseteq \g_{-2m}, \] while the isomorphism $ \Hom(V_{(m-1)},V_{-m})\cong \mf{q}_{m} $ is provided by \[ q\mapsto q-q^\ast.  \]

One has a corresponding short exact sequence \[ 1\rightarrow Z_m \rightarrow U_m \rightarrow Q_m \rightarrow 1.\]

\vskip 5pt

We have a similar decomposition for $U^+_m = U^+ \cap U_m$. Write similarly $Z^+_m = U^+ \cap Z_m$. Since we are now considering those elements of $\g$ with (absolute value of) weight at least 2 under the adjoint $\mfsl_2$-action, we may identify \[ \mf{q}^+_{m} \cong  \Hom(V_{(m-2)}\oplus V_{m-1},V_{-m}). \] 

In particular for $m\ge 1$, observe that one has $Z_m = Z^+_m$. 

See Figure \ref{fig:MatrixRep} for an illustration of a matrix representation of elements of $G=G_{(r)}$, which may help with visualisation of the definitions made here.

\vskip 5pt

One makes similar definitions for the group $G'$, except that $P'_m$ is the stabiliser of $V'_{m}$ in $G'_{(m)}$, and we have the identifications (again as vector spaces) \[ \mf{q}'_{m} \cong \Hom(V'_{m},V'_{(m-1)}), \] and \[  \mf{q}^{+\prime}_{m} \cong \Hom(V'_{m},V'_{(m-2)}\oplus V'_{-(m-1)}). \] 

The reason for the apparent difference in considering the stabiliser of $V'_m$ (rather than $V'_{-m}$) is due to the identification of $V$ and $V^\ast$ that we have made in Remark \ref{rem:DualVectorSpace}.

\subsection{Moment map transfer of nilpotent orbits}\label{NilpOrbitTransfer}

We work now over the number field $k$. 

\begin{prop}\label{prop:TransferNilpOrbit} (Transfer of nilpotent orbits via moment map) \cite[Section 3.2]{Zh}

One has moment maps \[  \begin{tikzcd}
 & &\Hom(V,V')\arrow{ld}{}[swap]{\phi'} \arrow[rd, "\phi"] & \\
& \g' & & \g 
\end{tikzcd} \]
defined by \[\phi(f) = f^\ast f,\] \[\phi'(f) = f f^\ast.\] 

Given a nilpotent element $e'$ in the image of $\phi'$ corresponding to a $\mfsl_2$-triple $\gamma'$, one may uniquely define a nilpotent orbit/conjugacy class of $\mfsl_2$-triple $\gamma$ of $\g$ (with corresponding nilpotent element $e\in\g$) such that:
\begin{itemize}
    \item $e,e'$ are the images of some common element $f\in\Hom(V,V')$;
    \item the form on $V$ restricts to a nondegenerate form on $\ker f$ (including if $\ker f = 0$),
    \item and $f$ sends $V_k$, the $k$-weight space of $V$, to $V'_{k+1}$, the $(k+1)$-weight space of $V'$ for all $k\in \Z$ (here the weight spaces are under the $\mfsl_2$ action coming from $\gamma,\gamma'$, as in section \ref{WeightSpace}). 
\end{itemize}

The second and third conditions may be summarised in the following diagram:\[ 
\begin{array}{c}   V_{-r}   \oplus    V_{-(r-1)}    \oplus    \cdots    \oplus    V_{r-1}   \oplus   V_{r}   \\
  \hspace{45pt} \searrow \hspace{-3pt} {\scriptstyle f_{-r}}  \hspace{10pt}  \searrow \hspace{-3pt} {\scriptstyle f_{-(r-1)}} \hspace{5pt}  \cdots\hspace{5pt}    \searrow \hspace{-3pt} {\scriptstyle f_{r-1}} \hspace{-5pt}   \searrow \hspace{-3pt} {\scriptstyle f_{r}}    \\
{V'}_{-(r+1)}    \oplus     {V'}_{-r}     \oplus     {V'}_{-(r-1)}    \oplus    \cdots    \oplus    {V'}_{r-1}   \oplus   {V'}_{r}    \oplus   {V'}_{r+1}
\end{array}
\] 

where here $f_m = f|_{V_m}$ for each $m$, and $\ker f=\ker f_0$, $\ker f_m = 0$ for $m\ne 0$. One can show \cite[Lemma 5.6]{GZ} that $f_m$ must be an isomorphism for $m>0$.

\end{prop}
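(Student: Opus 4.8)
The plan is to prove Proposition~\ref{prop:TransferNilpOrbit}, and in particular the claim that once $e'$ (equivalently $\gamma'$) and a choice of $f$ with $\phi'(f)=e'$ are fixed, the data on the $G$-side is determined and assembles into a genuine $\mfsl_2$-triple $\gamma$ with the stated compatibilities. First I would set up the representation-theoretic bookkeeping: fix the $\mfsl_2$-triple $\gamma'=\{e',h',f'\}\subset\g'$, decompose $V'=\bigoplus_j {V'}^{\,\oplus a'_j}_j$ into isotypic components as in Section~\ref{NilpPartition}, and write the weight-space decomposition $V'=\bigoplus_k V'_k$. The element $f\in\Hom(V,V')$ with $ff^\ast=e'$ should be thought of, via the identification $\Hom(V,V')\cong V\otimes V'$ of Remark~\ref{rem:DualVectorSpace}, as (part of) an intertwining datum; the equation $ff^\ast=e'$ forces $\im f$ to be $(e')$-stable and, after quotienting by $\ker f$, identifies $V/\ker f$ with a subspace of $V'$ on which the induced nilpotent is $(e')$-conjugate. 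The key structural input is that $f^\ast f = e$ and $ff^\ast = e'$ have the ``same'' nonzero Jordan structure: a standard linear-algebra fact (the nonzero part of the Jordan form of $AB$ equals that of $BA$) shows that $e:=f^\ast f$ is nilpotent on $V$, nilpotent orbits match up on $V/\ker f$ versus $\im f$, and the extra part of $V'$ not hit by $f$ must, by the nondegeneracy of $B'$ and the requirement that $\ker f$ carry a nondegenerate $B$-form, be accounted for by a single ``defect'' string --- this is what produces the weight shift by $1$.

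Next I would construct the $\mfsl_2$-triple $\gamma=\{e,h,f_{\mathrm{nil}}\}$ explicitly (I will rename the lower nilpotent of the triple to avoid clashing with the linear map $f$). The natural guess, dictated by the third bullet point, is to put $h$ acting on $V_k$ (the preimage under $f$ of $V'_{k+1}$, together with the correction on $\ker f$) by the scalar $k$; equivalently, one transports the grading from $V'$ back along $f$ and extends it to $\ker f$ by declaring $\ker f\subseteq V_0$. Then $e:=f^\ast f$ has adjoint-weight $+2$ for this $h$ because $f$ raises weight by $1$ and $f^\ast\colon V'_{k+1}\to V_{k+2}$ raises weight by $1$ as well (using that $V_m^\ast=V_{-m}$, ${V'}_m^\ast=V'_{-m}$, so $f^\ast$ is the transpose in the graded sense, shifting by $+1$ on the $V$-side). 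The lower nilpotent is forced to be $f_{\mathrm{nil}}:=$ the unique $\g$-element completing $\{e,h\}$; one checks $[h,f_{\mathrm{nil}}]=-2f_{\mathrm{nil}}$ and $[e,f_{\mathrm{nil}}]=h$ by working string-by-string in the $\mfsl_2$-decomposition of $V'$ pulled back to $V$. The nondegeneracy of $B$ on $\ker f$ is exactly the condition that makes $(e,h,f_{\mathrm{nil}})$ lie in $\g=\g(V)$ rather than merely $\mfgl(V)$: it guarantees that the transported form on $V$ is still $B$, so the constructed operators are skew (resp. symmetric) for $B$ in the right way; and uniqueness of $\gamma$ follows because the conjugacy class of $\{e,h,f_{\mathrm{nil}}\}$ is pinned down by the $\mfsl_2$-module structure of $V$, which is determined by the partition of $V'$ and the single defect.

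Finally I would verify the last sentence, that $f_m:=f|_{V_m}$ is an isomorphism onto $V'_{m+1}$ for $m>0$ (this is \cite[Lemma 5.6]{GZ}). The idea: for $m>0$, $\ker f_m=0$ since $\ker f\subseteq V_0$, so $f_m$ is injective; for surjectivity, count dimensions string by string. In each irreducible $\mfsl_2$-string inside $V'$ of length $j'$ (weights $-(j'-1),\ldots,j'-1$ in steps of $2$), the part of $V'$ in the image of $f$ corresponds to a string in $V$ shifted down by one in weight, i.e. of length $j'-1$; matching top weights forces the $V$-side string to occupy weights $-(j'-2),\ldots,j'-2$, so for every positive weight $m$ appearing in $V'_{m+1}$ there is exactly one matching basis vector in $V_m$, giving $\dim V_m=\dim V'_{m+1}$ for $m>0$. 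Combined with injectivity this yields the isomorphism; the strings that do \emph{not} come from $f$ (the defect) live entirely in $\ker f\subseteq V_0$ and in the extremal weight spaces $V'_{\pm(r+1)}$, consistent with the diagram. I expect the main obstacle to be the careful bookkeeping that ties together three things at once --- the matching of Jordan/$\mfsl_2$ types of $f^\ast f$ and $ff^\ast$, the compatibility of the induced bilinear forms (so that the triple genuinely lands in $\g$, not just $\mfgl(V)$), and the precise location of the ``defect'' string forced by $B|_{\ker f}$ nondegenerate --- rather than any single deep step; once the string-by-string picture is in place, each verification ($[h,e]=2e$, $[h,f_{\mathrm{nil}}]=-2f_{\mathrm{nil}}$, $[e,f_{\mathrm{nil}}]=h$, skew/symmetry, uniqueness, and $f_m$ an isomorphism for $m>0$) is a short computation.
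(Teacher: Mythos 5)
The paper itself does not prove this proposition: it is recalled from \cite[Section 3.2]{Zh}, with the final assertion quoted from \cite[Lemma 5.6]{GZ}, so there is no in-paper argument to match. Judged on its own terms, your sketch identifies the right structural ingredients (the interleaved Jordan-string picture for $f$ and $f^\ast$, the weight shift by one, and the string-by-string count giving $\dim V_m=\dim V'_{m+1}$ for $m>0$), and your verification of the last sentence is essentially correct --- note only that the part of $V'$ missed by $f$ is the span of the \emph{lowest}-weight lines of all the $\mfsl_2$-strings of $V'$, hence sits in the weight-$\le 0$ spaces, not in $V'_{\pm(r+1)}$.

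There are, however, two genuine gaps. The first concerns existence. The hypothesis is only that $e'=\phi'(f)$ for \emph{some} $f$, and such an $f$ need not satisfy the second and third bullets: the fiber $\phi'^{-1}(e')$ typically maps under $\phi$ onto several distinct nilpotent orbits (already for $e'=0$, any $f$ with $\im f^\ast$ totally isotropic lies in the fiber, and $f^\ast f$ then has partition $[2^a,1^b]$ with $a$ varying). So "transporting the grading along the given $f$" does not get off the ground for an arbitrary element of the fiber; moreover the preimages $f^{-1}(V'_{k+1})$ all contain $\ker f$ and are not complementary, so they do not define a grading. One must instead construct a \emph{good} $f$ explicitly from the isotypic decomposition $V'\cong\bigoplus_j V^{j\prime}\otimes W_j$, sending $V^{j\prime}\otimes W_{j-1}$ into $V^{j\prime}\otimes W_j$ by a weight-raising embedding $W_{j-1}\hookrightarrow W_j$ and killing $V_{new}$, and then prove that the numerical and form-theoretic conditions needed for $(V,B)$ to admit such a decomposition already follow from the mere nonemptiness of the fiber. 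The second gap concerns uniqueness. Over the number field $k$, a nilpotent orbit of a classical group is determined by its partition \emph{together with} the isometry classes of the forms $B_j$ on the multiplicity spaces (Section \ref{NilpPartition}); "pinned down by the $\mfsl_2$-module structure of $V$" only accounts for the partition. You must additionally check that the bullet conditions force $(V^j,B_j)\cong(V^{(j+1)\prime},B'_{j+1})$ for $j\ge 2$ (via the identification of multiplicity spaces induced by $f$) and that $(V^1,B_1)$ is then determined by Witt cancellation against $(V,B)$ --- exactly the data the paper records immediately after the proposition.
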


In what follows we will implicitly assume, unless otherwise specified, that we are working with $e'\in \gamma'$ which is in the image of the moment map $\phi'$, so that the condition of the preceding proposition is satisfied. 

The partitions corresponding to $\gamma',\gamma$ are related in the following way: suppose their corresponding Young tableaux are $\lambda',\lambda$ respectively. Then one removes the first column of $\lambda'$ and adds suitably many rows of length 1, to obtain $\lambda$. 

Recall from Section \ref{NilpPartition} that the nilpotent orbits of $\g',\g$ are parameterised also by (symplectic or orthogonal) forms $B'_j, B_j$ on the multiplicity spaces $V^{j\prime}, V^j$. To obtain the corresponding forms $B_j$ for $\gamma$, the forms $B'_j$ from $\gamma'$ are left unchanged, (and the form $B_1$ corresponding to the rows of length 1 in $\lambda$ is determined by the compatibility condition of Section \ref{NilpPartition}). 

In other words, one has \[ (V^j,B_j)  \cong (V^{(j+1)\prime},B'_{j+1}) \quad \text{for $j \geq 2$} \]
and \begin{equation}\label{Vnew} V^1 \cong  V^{2\prime} \oplus V_{new} \end{equation} for a nondegenerate subspace $V_{new}$ corresponding to the \textit{newly added} rows of length 1 in $\lambda$. In fact, $V_{new}=\ker f$.

See Figure \ref{fig:TransferNilpOrbits} for an illustration. 

\begin{figure}[!h]
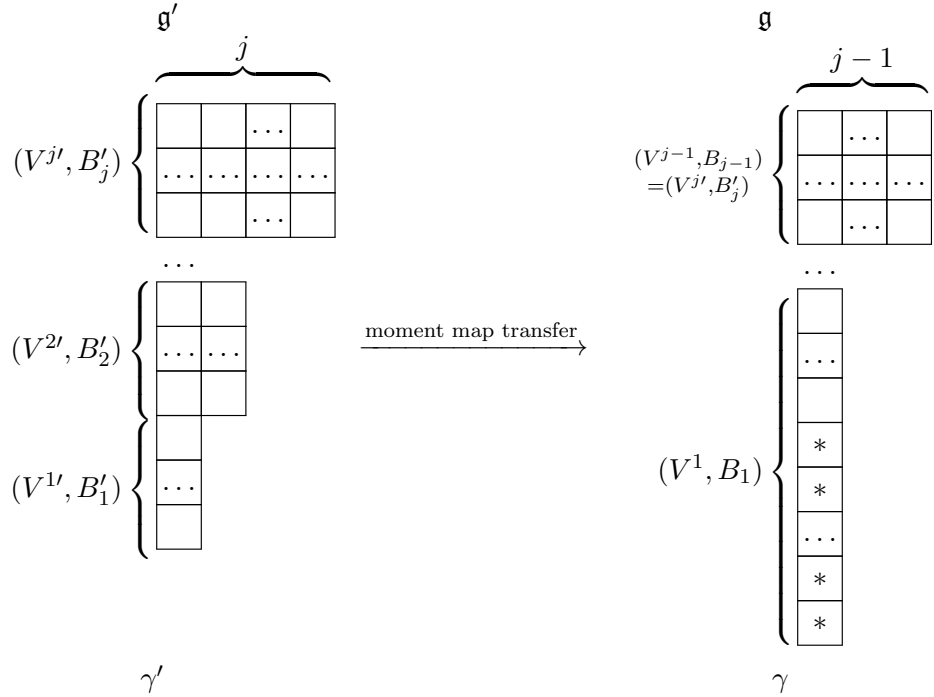

    \begin{center}
$\mf{g}' \quad\quad\quad\quad\quad\quad\quad\quad\quad\quad\quad\quad\quad\quad\quad\quad\quad\quad\quad\quad \mf{g}$
    
   \ytableausetup {mathmode,boxframe=normal,boxsize=1.5em,centertableaux} 
   \begin{tabular}{r@{}l}
   & $\overbrace{\hspace{6em}}^{\displaystyle j}$\\
   \begin{tabular}{r@{}l} $(V^{j\prime}, B'_j)\left\{\vphantom{\begin{ytableau}
    \quad \\ \quad  \\ \quad
 \end{ytableau}}\right.$ \\ \vspace{0.5em} \\
 $(V^{2\prime}, B'_2)\left\{\vphantom{\begin{ytableau}
    \quad \\ \quad  \\ \quad
 \end{ytableau}}\right.$ \\ $(V^{1\prime}, B'_1)\left\{\vphantom{\begin{ytableau}
    \quad \\ \quad  \\ \quad
 \end{ytableau}}\right.$ \end{tabular} & \begin{ytableau} \quad & \quad &\dots &\quad \\ \dots & \dots &\dots &\dots \\ \quad & \quad &\dots &\quad \\ \none[\dots] \\ \quad & \quad \\ \dots &\dots \\ \quad &\quad \\ \quad \\ \dots \\ \quad  \end{ytableau} \\ \\ \\ \\
   \end{tabular} $\xrightarrow{\text{moment map transfer}}$
\ytableausetup {mathmode,boxframe=normal,boxsize=1.5em,centertableaux} 
   \begin{tabular}{r@{}l} 
   & $\overbrace{\hspace{4.5em}}^{\displaystyle j-1}$\\
   \begin{tabular}{r@{}l} $\substack{(V^{j-1}, B_{j-1}) \\ = (V^{j\prime}, B'_j)}  \left\{\vphantom{\begin{ytableau}
    \quad \\ \quad  \\ \quad
 \end{ytableau}}\right.$ \\ \vspace{0.5em} \\
 $(V^1, B_1)\left\{\vphantom{\begin{ytableau}
    \quad \\ \quad  \\ \quad \\ \quad \\ \quad  \\ \quad \\ \quad \\ \quad
 \end{ytableau}}\right.$  \end{tabular} & \begin{ytableau} \quad &\dots &\quad \\  \dots &\dots &\dots \\  \quad &\dots &\quad \\ \none[\dots] \\   \quad \\ \dots \\ \quad \\ \ast \\ \ast \\ \dots \\ \ast \\ \ast  \end{ytableau}
   \end{tabular}

$\gamma' \quad\quad\quad\quad\quad\quad\quad\quad\quad\quad\quad\quad\quad\quad\quad\quad\quad\quad\quad\quad\quad \gamma$

\end{center}
\vspace{1em}
\caption{An illustration of the transfer of nilpotent orbits via the moment maps, in terms of the Young tableaux $\lambda'$ and $\lambda$. \\ $\ast$ indicates the newly added rows of length 1 in $\lambda$. }\label{fig:TransferNilpOrbits}
\end{figure}

Recall from Section \ref{NilpPartition} that \[ M_{\gamma'}  \cong \prod_{k=1}^j G'(V^{k\prime},B'_k) \] and \[ M_{\gamma}  \cong \prod_{k=1}^j G(V^k,B_k). \]

In particular $M_{\gamma'}$ and $M_{\gamma}$ contain respectively factors 
\[G'(V^{1\prime},B'_1)\] and \[G(V^1,B_1), \] corresponding respectively to the rows of length 1 in $\lambda'$ and $\lambda$. 

Furthermore $G(V^1,B_1)$ contains a subgroup $G(V_{new})$ (cf. (\ref{Vnew})), which is an isometry group of the subspace $V_{new}\subseteq V^1$ corresponding to the \textit{newly added} rows of length 1 in $\lambda$ (cf. Proposition \ref{prop:TransferNilpOrbit}).

We have that $G'(V^{1\prime},B'_1)$ and $G(V_{new})$ forms a reductive dual pair inside \[ \Sp\big(\Hom(V_{new}, V^{1\prime})\big), \] and of the same type as $G'$ and $G$ respectively. Note here that $\Hom(V_{new}, V^{1\prime})$ is a subspace of $\Hom(V,V')$ with the inherited symplectic form. 

Write $(L',L)$ for this dual pair, and $\omega_{\gamma,\gamma'}$ for the Weil representation associated to this dual pair. 

\subsection{Weil representation and Lagrangians}\label{WeilRep}

Write $\omega_{(l),(m)}$ for the Weil representation associated to the dual pair $(G_{(l)}, G'_{(m)})$; it is realised on the space of Schwarz functions on a Lagrangian $Y_{(l),(m)}$ of the symplectic vector space $\Hom(V_{(l)},V'_{(m)})$. 

Now let us choose Lagrangian subspaces once and for all in the following way. Choose a Lagrangian $Y_{(0),(0)}$ of $\Hom(V_{(0)},V'_{(0)})$. Then for each $0\le j\le r$, set \[ Y_{(j),(j+1)} = \Hom(V_{(j)}, V'_{j+1}) \oplus Y_{(j),(j)} \] and for each $1\le j\le r$, set \[ Y_{(j),(j)} = \Hom(V_{-j}, V'_{(j)}) \oplus Y_{(j-1),(j)}. \]

It is clear that the Weil representation $\omega_{(r),(r+1)}$ may be viewed as realised on the space of Schwarz functions on $\Hom(V_{(r)}, V'_{r+1})$ taking values in $\omega_{(r),(r)}$. We record here some pertinent formulas for the Weil representation $\omega_{(r),(r+1)}$, to be used later in the global computation. 

For the action of $G_{(r)}$, one has 
\begin{equation}\label{Graction}
    (\omega_{(r),(r+1)}(g)\phi)(F) = \omega_{(r),(r)}(g) \phi(Fg)
\end{equation} 
for $g\in G_{(r)}$ and $F\in \Hom(V_{(r)}, V'_{r+1})$. 

For the action of $U'_{r+1}$, recall from section \ref{WeightSpace} that one has the vector space decomposition  \[ \mf{u}'_{r+1} \cong \mf{q}'_{r+1} \oplus \mf{z}'_{r+1} \cong \Hom(V'_{r+1},V'_{(r)}) \oplus \mf{z}'_{r+1}  \] and $\mf{z}'_{r+1}\subseteq \Hom(V'_{r+1}, V'_{-(r+1)})$.

Then one has \begin{equation}\label{Zr+1action}
    (\omega_{(r),(r+1)}(\exp z')\phi)(F) = \psi(\mathop{\Tr} z'FF^\ast/2) \phi(F)
\end{equation} 
for $z'\in \mf{z}'_{r+1}$ and $F\in \Hom(V_{(r)}, V'_{r+1})$, and
\begin{equation}\label{Rr+1action}
    (\omega_{(r),(r+1)}(\exp q')\phi)(F) = \omega_{(r),(r)}(q'F) \phi(F)
\end{equation}
for $q'\in \mf{q}'_{r+1} \cong \Hom(V'_{r+1},V'_{(r)})$ and $F\in \Hom(V_{(r)}, V'_{r+1})$, and here $\omega_{(r),(r)}(q'F)$ gives the action of the Heisenberg group associated to $\Hom(V_{(r)},V'_{(r)})$. 

One also has \begin{equation}\label{Mr+1action}
    (\omega_{(r),(r+1)}(m')\phi)(F) = \nu(m') \phi(m'^{-1} F)
\end{equation} 
for $m'\in M'_{r+1}\cong \GL(V'_{r+1})$ and some character $\nu$ whose explicit form is not important. 

\subsection{Periods}\label{Periods}

Observe that $U'/U^{+\prime}\cong \g'_{-1}$ may be given the structure of a symplectic vector space, coming from the non-degenerate pairing $\kappa'$ between $\g'_{-1}$ and $\g'_1$, via
\[ \langle v,w\rangle := \kappa'({\rm ad}(e)v, w) ,\]
and this symplectic form is $M_{\gamma'}$-invariant (hence, in particular, $L'$-invariant). 

We thus obtain the Heisenberg-Weil representation $\omega_{\gamma'}$ of $U'$, on which $U^{+\prime}$ acts by the character $\chi_{\gamma'}$. It is realised on the space of Schwarz functions on a Lagrangian of the symplectic vector space $\g'_{-1}$. Denote by $\theta_{\gamma'}$ the corresponding formation of theta series.  

\begin{definition}
    For $f'$ a cusp form on $G'$, $\phi_{\gamma'}\in \omega_{\gamma'}$, and $\tau'$ a cusp form on $L'$, define the period integral \[ P_{\gamma',\psi}(f', \phi_{\gamma'}, \tau') = \int_{[L']} \int_{[U']} f'(u'l') \cdot \overline{ \theta_{\gamma'}(\phi_{\gamma'})(u'l') \cdot \tau'(l')  } \mathop{du'} \mathop{dl'} \]

We also make the corresponding definition for $G$ (with $L,U$ in place of $L',U'$). 
\end{definition} 

By \cite[Proposition A.1.1(ix)]{BP}, this period integral is absolutely convergent even without any cuspidality assumption on $\tau'$, thanks to the cuspidality of $f'$. 

\section{Key computation}\label{sec:KeyComputation}

 Let $\pi$ be a cuspidal automorphic representation of $G$, and $\tau'$ a cusp form on $L'$. For a cusp form $f\in \pi$, and $\phi\in \omega$, we would like to compute the period integral of the theta lift to $G'$, 
 
\begin{align*}
& P_{\gamma',\psi}(\theta(\phi,f), \phi_{\gamma'}, \tau') \\
&= \int_{[L']}  \int_{[U']}  \int_{[G]} \overline{f(g)} \cdot\theta(\phi)(gu'l') \cdot\overline{ \theta_{\gamma'}(\phi_{\gamma'})(u'l') \cdot \tau'(l')  } \mathop{dg} \mathop{du'} \mathop{dl'} \\
&= \int_{[G]} \overline{f(g)} \int_{[L']}  \int_{[U']}  \theta(\phi)(gu'l') \cdot\overline{\theta_{\gamma'}(\phi_{\gamma'})(u'l') \cdot\tau'(l') } \mathop{du'} \mathop{dl'} \mathop{dg} \\
&= \int_{[G]} \overline{f(g)} \int_{[L']}  \int_{[U'/U^{+\prime}]}  \int_{[U^{+\prime}]} \theta(\phi)(g u^{+\prime}\overline{u'}l')\cdot \overline{\chi_{\gamma'}(u^{+\prime})}\mathop{du^{+\prime} } \overline{\theta_{\gamma'}(\phi_{\gamma'})(\overline{u'}l') \cdot\tau'(l') } \mathop{d\overline{u'}} \mathop{dl'} \mathop{dg} 
\end{align*}
where $\overline{u'}$ denotes the image of $u'\in U'$ in $U'/U^{+\prime}$, and where we have interchanged the order of integration due to the cuspidality of the forms involved (and hence the absolute convergence of the integral).  

\subsection{Key computation}

Hence, one sees that the key step to relate the above integral to a period integral of $f$, is to compute the innermost integral, which is a period integral of the theta function 

\begin{equation}
\label{Key}
   \int_{U^{+\prime}(k)\bs U^{+\prime}(\A)} \overline{\chi_{\gamma'}(u')} \cdot\theta(\phi)(gu') \mathop{du'} \mathop{dg} 
\end{equation}

(To keep the notation manageable, we have dropped the superscript ${}^+$ from the variable $u^{+\prime}$.)

Recall from section \ref{WeightSpace} that one has \[ U^{+\prime}=\big( (U^{+\prime}_1\ltimes U^{+\prime}_2)\ltimes \cdots \big) \ltimes U^{+\prime}_{r+1}, \] and $\chi_{\gamma'} = \chi_{\gamma',1}\cdots \chi_{\gamma',r+1}$. The idea is to compute the integral over $U^{+\prime}$ by inductively, or iteratively, computing the integrals first over $[U^{+\prime}_{r+1}]$, and so on, down to $[U^{+\prime}_{1}]$.

\subsection{Iterative step} Therefore, we first compute 
\begin{equation}
\label{KeyInductive} 
\int_{U^{+\prime}_{r+1}(k)\bs U^{+\prime}_{r+1}(\A)} \overline{\chi_{\gamma',r+1}(u')} \cdot\theta(\phi)(gu') \mathop{du'},
\end{equation}
for $r\ge 1$. 

The idea is to substitute the definition of the theta series and perform a systematic unfolding of the integrals. 

\underline{Step 1:} As in section \ref{WeilRep}, the Weil representation $\omega=  \omega_{(r),(r+1)}$ is realised on the space of Schwarz functions on the Lagrangian $Y_{(r),(r+1)}$ of $\Hom(V_{(r)}, V'_{(r+1)})$. 

We have \[ (\ref{KeyInductive}) = \int_{U^{+\prime}_{r+1}(k)\bs U^{+\prime}_{r+1}(\A)} \overline{\chi_{\gamma', r+1}(u')} \sum_{y\in Y_{(r),(r+1)}(k)} (\omega_{(r),(r+1)}(gu')\phi)(y) \mathop{du'} \]

Furthermore recall from section \ref{WeightSpace}, that we have a short exact sequence  \[ 1\rightarrow Z^{+\prime}_{r+1} \rightarrow U^{+\prime}_{r+1} \rightarrow Q^{+\prime}_{r+1} \rightarrow 1,\] with $\chi_{\gamma',r+1}$ trivial on $Z^{+\prime}_{r+1}$ for $r\ge 1$. 

So we may write \[ (\ref{KeyInductive}) =\int_{[Q^{+\prime}_{r+1}]} \overline{\chi_{\gamma', r+1}(q')} \int_{[Z^{+\prime}_{r+1}]} \sum_{y\in Y_{(r),(r+1)}(k)} (\omega_{(r),(r+1)}(z' q' g)\phi)(y) \mathop{dz'} \mathop{dq'} \]
where, by abuse of notation, we identify elements $q'$ of $Q^{+\prime}_{r+1}$ with elements of \[ \Hom(V'_{r+1},V'_{(r)}), \] and write $\omega_{(r),(r+1)}(q')$ for the action of $\omega_{(r),(r+1)}(\exp q')$ as given in (\ref{Rr+1action}). 

Recall now from section \ref{WeilRep} the identification $Y_{(r),(r+1)} = \Hom(V_{(r)}, V'_{r+1}) \oplus Y_{(r),(r)}$, and the realisation of $\omega_{(r),(r+1)}$ on the space of Schwarz functions on $\Hom(V_{(r)}, V'_{r+1})$ taking values in $\omega_{(r),(r)}$.

We may thus write \begin{align*} (\ref{KeyInductive}) =\int_{[Q^{+\prime}_{r+1}]} \overline{\chi_{\gamma', r+1}(q')} \int_{[Z^{+\prime}_{r+1}]} \sum_{\substack{F_r\in \Hom(V_{(r)}, V'_{r+1})(k), \\ y_{(r),(r)}\in Y_{(r),(r)}(k)} } (\omega_{(r),(r+1)}(z' q' g)\phi)(F_r)(y_{(r),(r)}) \mathop{dz'} \mathop{dq'} \end{align*}

Now from (\ref{Zr+1action}), which gives the action of $Z^{+\prime}_{r+1}=Z'_{r+1}$, we see immediately that only the $F_r$ with $F_r F_r^\ast=0$ contribute to the inner integral. For those $F_r$ which do contribute, we get from (\ref{Zr+1action}) also that \begin{align*} (\ref{KeyInductive}) =\int_{[Q^{+\prime}_{r+1}]} \overline{\chi_{\gamma', r+1}(q')} \sum_{\substack{F_r\in \Hom(V_{(r)}, V'_{r+1})(k), \\ F_r F_r^\ast=0, \\ y_{(r),(r)}\in Y_{(r),(r)}(k)} } (\omega_{(r),(r+1)}( q' g)\phi)(F_r)(y_{(r),(r)}) \mathop{dq'} \end{align*}
(Note certainly that the Tamagawa number of the unipotent $Z^{+\prime}_{r+1}$ is 1.)

\underline{Step 2:} Similarly, one would now like to consider the action of $Q^{+\prime}_{r+1}$, as given by (\ref{Rr+1action}). However, the action of $Q^{+\prime}_{r+1}$ is given by the Heisenberg group action as discussed in and after (\ref{Rr+1action}), which is not so uniform to describe.

\vskip 5pt

The aim of this step is to show that only the $F_r$ with maximal rank (i.e. surjective onto $V'_{r+1}$) contribute to the sum.

Suppose therefore that $F_r$ is \textit{not} of maximal rank. By (\ref{Graction}), and noting of course that the theta series associated to $\omega_{(r),(r)}$ is invariant under $G_{(r)}(k)$, we may first without loss of generality conjugate $F_r$ under the $G_{(r)}$ action, so that $F_r$ is trivial when restricted to $V_{-r}\oplus\cdots\oplus V_{r-1}$, and we may hence view $F_r$ as an element of $\Hom(V_{r},V'_{r+1})$. (This is always possible since $\mathop{\im} F_r^\ast$ is totally isotropic and has dimension at most $\dim V'_{r+1} = \dim V_r$.)

Then observe that for $q'\in \Hom(V'_{r+1},V'_{(r)})$, one has $q'F_r\in \Hom(V_{r}, V'_{(r)})$. In other words, $q'F_r$ is in the maximal isotropic subspace of $\Hom(V_{(r)}, V'_{(r)})$ complementary to $Y_{(r),(r)} = \Hom(V_{-r}, V'_{(r)}) \oplus Y_{(r-1),(r)}$.

From (\ref{Rr+1action}), then, the action of $Q^{+\prime}_{r+1}$ is given by 
\begin{equation}\label{Rr+1action2}
    (\omega_{(r),(r+1)}(\exp q')\phi)(F_r)(F_{-r}) = \psi(\mathop{\Tr} q'F_r F_{-r}^\ast) \phi(F_r)(F_{-r})
\end{equation}
for $F_{-r}\in \Hom(V_{-r}, V'_{(r)})$. Similarly, here we have viewed the Heisenberg-Weil representation $\omega_{(r),(r)}$ as being realised on the space of Schwarz functions on $\Hom(V_{-r}, V'_{(r)})$ taking values in $\omega_{(r-1),(r)}$. 

Note now (cf. section \ref{NilpOrbits}) that the action of $\chi_{\gamma',r+1}$ is given by (the composition of $\psi$ with) the trace form $q' \mapsto \Tr(q'e')$.  

Therefore, in order for any fixed $(F_r, F_{-r})$ to contribute non-trivially to the integral, one requires \[ \Tr(q' F_r F_{-r}^\ast) = \Tr(q' e') \] for all $q' \in \mf{q}^{+\prime}_{r+1}\cong \Hom(V'_{r+1},V'_{(r-1)}\oplus V'_{-r})$, where the trace is taken for linear maps from $V'_{(r)}$ to itself. 

Since $q'$ maps trivially into $V'_r$, one readily sees that this is equivalent to \[  F_r F_{-r}^\ast|_{V'_{(r-1)}\oplus V'_{-r}} = e'|_{V'_{(r-1)}\oplus V'_{-r}},  \] as maps into $V'_{r+1}$. 

Now since, by standard $\mfsl_2$-theory, $\mathop{\im} (e'|_{V'_{r-1}})$ contains all of $V'_{r+1}$, it follows that if $F_r$ is not of maximal rank, i.e. its image does not contain all of $V'_{r+1}$, then its contribution to the integral must be zero, as desired.

\vskip 5pt

We thus obtain 
\begin{align*} (\ref{KeyInductive}) =\int_{[Q^{+\prime}_{r+1}]} \overline{\chi_{\gamma', r+1}(q')} \sum_{\substack{F_r\in \Hom(V_{(r)}, V'_{r+1})(k), \\ F_r F_r^\ast=0, \text{$F_r$ of maximal rank,} \\ y_{(r),(r)}\in Y_{(r),(r)}(k)} } (\omega_{(r),(r+1)}( q' g)\phi)(F_r)(y_{(r),(r)}) \mathop{dq'} \end{align*}

The next two steps are hence devoted to choosing representatives for the $(F_r,F_{-r})$ which contribute to the integral. 

\underline{Step 3:} Observe that the $F_r$ with $F_r F_r^\ast=0$ and of maximal rank form a single orbit under the $G=G_{(r)}$-action. Choose a representative $f_r$ in this orbit which, as in Step 2, is trivial when restricted to $V_{-r}\oplus\cdots\oplus V_{r-1}$, and restricts to an isomorphism $V_r \xrightarrow{\sim} V'_{r+1}$. The stabiliser of such $f_r$ in $G_{(r)}$ is $G_{(r-1)}U_r$. 

Considering also the action of $G_{(r)}$ as given in (\ref{Graction}), and noting of course that the theta series associated to $\omega_{(r),(r)}$ is invariant under $G_{(r)}(k)$, we may write 
\begin{align*} (\ref{KeyInductive}) = & \int_{[Q^{+\prime}_{r+1}]} \sum_{g_r\in G_{(r-1)}U_r(k)\bs G(k)} \overline{\chi_{\gamma', r+1}(q')} \\ &\sum_{ y_{(r),(r)}\in Y_{(r),(r)}(k) } (\omega_{(r),(r+1)}( q' g_r g)\phi)(f_r)(y_{(r),(r)}) \mathop{dq'} 
\end{align*}

Then, recalling the decomposition $Y_{(r),(r)} = \Hom(V_{-r}, V'_{(r)}) \oplus Y_{(r-1),(r)}$, we have

\begin{align*}
(\ref{KeyInductive}) = &\sum_{g_r\in G_{(r-1)}U_r(k)\bs G(k)} \int_{[Q^{+\prime}_{r+1}]} \overline{\chi_{\gamma', r+1}(q')} \\ &\sum_{ \substack{F_{-r}\in  \Hom(V_{-r}, V'_{(r)})(k), \\ y_{(r-1),(r)}\in Y_{(r-1),(r)}(k) }} (\omega_{(r),(r+1)}( q' g_r g)\phi)(f_r)(F_{-r})(y_{(r-1),(r)}) \mathop{dq'},
\end{align*}

Furthermore we have interchanged the summation $\sum_{g_r\in G_{(r-1)}U_r(k)\bs G(k)}$ and the integral $\int_{[Q^{+\prime}_{r+1}]}$, by absolute convergence. 

\begin{rem}
    Suppose we do not assume that $\gamma'$ is in the image of the moment map $\phi'$ (Proposition \ref{prop:TransferNilpOrbit}). Then, in order for the period integral under computation to be non-identically-vanishing, one will still require (in Steps 2 and 3) the existence of a $f_r$ and a (totally isotropic) subspace $V_r$ of $V$ such that $f_r$ restricts to an isomorphism $V_r \xrightarrow{\sim} V'_{r+1}$. With this convention in place, we continue the computation. The existence of such $f_r$ (for all $r$) and $f_{-r}$ in the next step will be equivalent to $\gamma'$ being in the image of the moment map $\phi'$, cf. the proof of Proposition \ref{KeyComputation} at the end of the section.
\end{rem}

\underline{Step 4:} As in Step 2, in order for $F_{-r}\in  \Hom(V_{-r}, V'_{(r)})$ to contribute non-trivially to the sum, one requires \[ \Tr(q' f_r F_{-r}^\ast) = \Tr(q' e') \] for all $q' \in \mf{q}^{+\prime}_{r+1}\cong \Hom(V'_{r+1},V'_{(r-1)}\oplus V'_{-r})$, where the trace is taken for linear maps from $V'_{(r)}$ to itself, and this is equivalent to  \[ f_r F_{-r}^\ast|_{V'_{(r-1)}\oplus V'_{-r}} = e'|_{V'_{(r-1)}\oplus V'_{-r}}. \]

Now since $e'$ maps trivially into $V'_{r+1}$ on all weight subspaces except $V'_{r-1}$, one sees that this is in turn equivalent to:
\begin{itemize}
    \item the existence of a $ F_{-r} = f_{-r} \in  \Hom(V_{-r}, V'_{(r)})$ with \[f^\ast_{-r} \in \Hom (V'_{r-1}, V_r)\] and \[f_r f_{-r}^\ast = e'\] as maps from $V'_{r-1}$ to $V'_{r+1}$;
    \item such choice of $f_{-r}$ is only unique up to translation by an element of $\Hom(V_{-r}, V'_{-r})$. 
\end{itemize} 

Consequently, we obtain 
\begin{align*} (\ref{KeyInductive}) = \sum_{g_r\in G_{(r-1)}U_r(k)\bs G(k)} \sum_{ \substack{S_{-r}\in  \Hom(V_{-r}, V'_{-r})(k), \\ y_{(r-1),(r)}\in Y_{(r-1),(r)}(k) }} (\omega_{(r),(r+1)}( g_r g)\phi)(f_r)(f_{-r}+S_{-r})(y_{(r-1),(r)}) 
\end{align*}
(as in Step 1, the Tamagawa number of the unipotent $Q^{+\prime}_{r+1}$ is 1.)

\subsection{Iteration}

Having now computed the integral over $[U^{+\prime}_{r+1}]$, one now iteratively or inductively proceeds to compute the integral over $[U^{+\prime}_{r}], \dots, [U^{+\prime}_{1}]$. 

Here one needs only the respective actions of $G_{(r-1)}$ and $U^{+\prime}_{(r)}$, and it is readily verified from the formulas for the Weil representation that
\begin{align}\label{Gr-1action}
    &\big(\omega_{(r),(r+1)}(g_{(r-1)})\phi\big)(f_r)(f_{-r}+S_{-r})(y_{(r-1),(r)}) \\
    = &\big(\omega_{(r-1),(r)}(g_{(r-1)})\, \phi(f_r)(f_{-r}+S_{-r})\big)(y_{(r-1),(r)}) \nonumber
\end{align} 
for all $g_{(r-1)}\in G_{(r-1)}$, and
\begin{align}\label{Uraction}
    &\big(\omega_{(r),(r+1)}(u'_{(r)})\phi\big)(f_r)(f_{-r}+S_{-r})(y_{(r-1),(r)}) \\
    = &\big(\omega_{(r-1),(r)}(u'_{(r)})\, \phi(f_r)(f_{-r}+S_{-r})\big)(y_{(r-1),(r)}) \nonumber
\end{align}
for all $u'_{(r)}\in U^{+\prime}_{(r)}$. For the second statement, see \cite[Lemma 6.7]{GZ}.

Therefore, the computation for $[U^{+\prime}_{r}], \dots, [U^{+\prime}_{1}]$ proceeds in completely the same way as that of $[U^{+\prime}_{r+1}]$. 

Note that, at each step, one may interchange, for instance, the summation \[ \sum_{g_r\in G_{(r-1)}U_r(k)\bs G(k)} \] and the next integral \[ \int_{U^{+\prime}_{r}(k)\bs U^{+\prime}_{r}(\A)}, \] by the absolute convergence properties of the entire integral.

\subsection{Base case}\label{BaseCase}

At the `base case' of $r=0$, there are certain differences (which in fact simplify the computation). 

First, $R^{+\prime}_1$ is trivial, hence only Step 1 need be considered and modified. Now $\chi_{\gamma',1}$ is no longer trivial on $Z^{+\prime}_1$, as one has $Z^{+\prime}_1\subseteq \g'_{-2}$ (cf. section \ref{WeightSpace}).

Therefore, one sees that only those $F_0$ with \[ F_0 F_0^\ast = e' \in \Hom(V'_{-1},V'_1) \] (instead of $F_0 F_0^\ast = 0$) contribute to the sum. Now $G_{(0)}$ acts transitively on the set of such $F_0$, and for the representative of $F_0$, one may choose a particular $F_0 = f_0$ with stabiliser $L$, which is the isometry group of $\ker f_0$ (cf. section \ref{NilpOrbitTransfer}).

\subsection{Final result of key computation} The end result of this iterative computation is therefore

\begin{align*} 
(\ref{Key}) &= \int_{U^{+\prime}(k)\bs U^{+\prime}(\A)} \overline{\chi_{\gamma'}(u')} \cdot\theta(\phi)(gu') \mathop{du'} \mathop{dg} \\ \\ &=  \sum_{g_r\in G_{(r-1)}U_r(k)\bs G(k)} \sum_{g_{r-1}\in G_{(r-2)}U_{r-1}(k)\bs G_{(r-1)}(k)} \cdots \sum_{g_{0}\in L(k)\bs G_{(0)}(k)} \\ &  \sum_{ \substack{S_{-r}\in  \Hom(V_{-r}, V'_{-r})(k), \\ \cdots \cdots  \\ S_{-1}\in  \Hom(V_{-1}, V'_{-1})(k), \\ y_{(0),(0)} \in Y_{(0),(0)}(k) }} (\omega( g_0\cdots g_{r-1} g_r g)\phi)(f_r)(f_{-r}+S_{-r})\cdots(f_{-1}+S_{-1})(f_0)(y_{(0),(0)}) \\ \\
&= \sum_{g_{c}\in LU(k)\bs G(k)} \\ &\sum_{ \substack{S_{-r}\in  \Hom(V_{-r}, V'_{-r})(k), \\ \cdots \cdots  \\ S_{-1}\in  \Hom(V_{-1}, V'_{-1})(k), \\ y_{(0),(0)} \in Y_{(0),(0)}(k) }} (\omega_{(r),(r+1)}( g_c g)\phi)(f_r)(f_{-r}+S_{-r})\cdots(f_{-1}+S_{-1})(f_0)(y_{(0),(0)})
\end{align*}
recalling that $U_1\cdots U_r = U$, and writing $g_c =  g_0\cdots g_{r-1} g_r$. 

We summarise this as

\begin{prop}\label{KeyComputation}
Retain all previous notation. 

If $\gamma'$ is in the image of the moment map $\phi'$, then \begin{align*} 
(\ref{Key}) = &\int_{U^{+\prime}(k)\bs U^{+\prime}(\A)} \overline{\chi_{\gamma'}(u')} \cdot\theta(\phi)(gu') \mathop{du'} \mathop{dg} \\ \\= &\sum_{g_{c}\in LU(k)\bs G(k)} \\ &\sum_{ \substack{S_{-r}\in  \Hom(V_{-r}, V'_{-r})(k), \\ \cdots \cdots  \\ S_{-1}\in  \Hom(V_{-1}, V'_{-1})(k), \\ y_{(0),(0)} \in Y_{(0),(0)}(k) }} (\omega_{(r),(r+1)}( g_c g)\phi)(f_r)(f_{-r}+S_{-r})\cdots(f_{-1}+S_{-1})(f_0)(y_{(0),(0)})
\end{align*}

Otherwise, if $\gamma'$ is not in the image of the moment map $\phi'$, then $(\ref{Key}) = 0$. 

\end{prop}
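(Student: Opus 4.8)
The plan is to assemble the iterative unfolding performed in Steps 1--4 above, together with the base case of Section \ref{BaseCase}, and then to read off the dichotomy from the characterisation of the image of $\phi'$ in Proposition \ref{prop:TransferNilpOrbit}.

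First I would substitute the definition of the theta series into (\ref{Key}) and, using $U^{+\prime} = ((U^{+\prime}_1 \ltimes U^{+\prime}_2) \ltimes \cdots) \ltimes U^{+\prime}_{r+1}$ and $\chi_{\gamma'} = \chi_{\gamma',1}\cdots\chi_{\gamma',r+1}$, peel off the integral over $[U^{+\prime}_{r+1}]$ first, as in (\ref{KeyInductive}). Steps 1--4 then record that: the $Z^{+\prime}_{r+1}$-integral, via (\ref{Zr+1action}), kills every $F_r$ with $F_rF_r^\ast\neq 0$; the $Q^{+\prime}_{r+1}$-phase, via (\ref{Rr+1action2}) together with $\chi_{\gamma',r+1}(q')=\psi(\Tr(q'e'))$ and the surjectivity of $e'|_{V'_{r-1}}\colon V'_{r-1}\to V'_{r+1}$, kills every $F_r$ not of maximal rank; the surviving $F_r$ form a single $G_{(r)}(k)$-orbit, whose chosen representative $f_r$ has stabiliser $G_{(r-1)}U_r$ and therefore produces a coset sum over $G_{(r-1)}U_r(k)\backslash G(k)$; and the same $\chi_{\gamma',r+1}$-equation pins the $\Hom(V'_{r-1},V_r)$-part of the Schwarz variable to $f_{-r}$, free up to translation by $S_{-r}\in\Hom(V_{-r},V'_{-r})(k)$. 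By (\ref{Gr-1action}) and (\ref{Uraction}), the residual integrand is then of exactly the original shape with $r$ lowered by one and $\omega_{(r),(r+1)}$ replaced by $\omega_{(r-1),(r)}$, so I would iterate: the integrals over $[U^{+\prime}_r],\dots,[U^{+\prime}_2]$ are computed verbatim, each producing a further coset sum over $G_{(m-2)}U_{m-1}(k)\backslash G_{(m-1)}(k)$ and a further variable $S_{-(m-1)}$, all interchanges of summations with integrals being justified by absolute convergence. The base case $[U^{+\prime}_1]$ differs only in that $Q^{+\prime}_1$ is trivial (so only Step 1 applies), $\chi_{\gamma',1}$ is non-trivial on $Z^{+\prime}_1\subseteq\g'_{-2}$, and the quadratic phase now forces $F_0F_0^\ast=e'$ rather than $=0$; a chosen representative $f_0$ of the resulting $G_{(0)}(k)$-orbit has stabiliser $L=G(\ker f_0)$.

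Next I would check that the nested coset sums telescope. Iterating the stabiliser computations, the stabiliser in $G=G_{(r)}$ of the full tuple $(f_r,f_{r-1},\dots,f_1,f_0)$ is $L\,U_1\cdots U_r=LU$; applying repeatedly the standard decomposition $\sum_{H_2(k)\backslash G(k)} = \sum_{H_1(k)\backslash G(k)}\sum_{H_2(k)\backslash H_1(k)}$ for nested subgroups $H_2\le H_1\le G$ then collapses $\sum_{g_r}\sum_{g_{r-1}}\cdots\sum_{g_0}$ into a single sum over $LU(k)\backslash G(k)$, with $g_c=g_0\cdots g_r$ and $U=U_1\cdots U_r$. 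Since each unipotent quotient integrated out ($Z^{+\prime}_m$ and $Q^{+\prime}_m$) has Tamagawa number $1$, this yields exactly the asserted identity.

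Finally, for the dichotomy, I would observe that the only hypothesis used anywhere above is the existence of the orbit-representatives: an isometry $f_m\colon V_m\xrightarrow{\sim}V'_{m+1}$ for each $m\ge 1$, an $f_{-m}$ with $f_mf_{-m}^\ast=e'$ as a map $V'_{m-1}\to V'_{m+1}$, and an $f_0$ with $f_0f_0^\ast=e'$ as a map $V'_{-1}\to V'_1$. Unwinding the weight conditions, these pieces assemble into a single $f\in\Hom(V,V')(k)$ with $ff^\ast=e'$ and $f(V_k)\subseteq V'_{k+1}$ for all $k$ --- that is, into a witness that $e'$ lies in the image of $\phi'$ in the sense of Proposition \ref{prop:TransferNilpOrbit}, where one also invokes \cite[Lemma 5.6]{GZ} to see that such an $f$ automatically restricts to isomorphisms on the positive weight spaces (matching the $f_m$). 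When no such $f$ exists, at least one of the orbit-sums arising in Steps 2--4, or in the base case, is over the empty set, so that the corresponding intermediate integral --- and hence (\ref{Key}) --- vanishes identically. The computational heart of the argument is already contained in Steps 1--4; the point that I expect to require the most care in writing up is this last equivalence, namely that the \emph{stepwise} existence of the $f_m$, $f_{-m}$, $f_0$ is not merely necessary for non-vanishing but genuinely assembles into the single global intertwiner $f$ characterising the moment map image, which is precisely where Proposition \ref{prop:TransferNilpOrbit} and \cite[Lemma 5.6]{GZ} do the real work.
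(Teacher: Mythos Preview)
Your proposal is correct and follows essentially the same approach as the paper: the proposition is a summary of the iterative unfolding carried out in Steps 1--4, the iteration via (\ref{Gr-1action})--(\ref{Uraction}), and the base case, together with the observation that non-vanishing forces the existence of the representatives $f_m$, $f_{-m}$, $f_0$, which is equivalent to $e'$ lying in the image of $\phi'$ via Proposition \ref{prop:TransferNilpOrbit}. One small terminological slip: in your final paragraph, the $f_m\colon V_m\xrightarrow{\sim}V'_{m+1}$ for $m\ge 1$ are isomorphisms of vector spaces, not isometries (both spaces are isotropic), but this does not affect the argument.
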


\begin{proof}
Suppose $(\ref{Key})$ is not identically zero, then one requires the existence of representatives $f_r$ and $f_{-r}$ as in Steps 3 and 4 for all $r$, which is equivalent to the existence of the moment map transfer as in Proposition \ref{prop:TransferNilpOrbit}; in particular $\gamma'$ must be in the image of the moment map $\phi'$.
\end{proof}

At this point we already obtain immediately:

\begin{thm}\label{KeyTheoremNotInImage}
Let $\pi$ be a cuspidal automorphic representation of $G$. If $\gamma'$ is not in the image of the moment map $\phi'$, then $P_{\gamma',\psi}(\theta(\cdot,\cdot), \cdot, \tau')$ is identically zero on $\Theta(\pi)$. 
\end{thm}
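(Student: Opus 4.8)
The plan is to deduce this immediately from Proposition~\ref{KeyComputation}. Recall that at the start of Section~\ref{sec:KeyComputation} the period integral $P_{\gamma',\psi}(\theta(\phi,f),\phi_{\gamma'},\tau')$ was unfolded, using the cuspidality of $f$ to justify interchanging the order of integration, into the form
\begin{align*}
\int_{[G]} \overline{f(g)} \int_{[L']}  \int_{[U'/U^{+\prime}]} \left( \int_{[U^{+\prime}]} \theta(\phi)(g u' \overline{u}\, l')\cdot \overline{\chi_{\gamma'}(u')}\,du' \right) \overline{\theta_{\gamma'}(\phi_{\gamma'})(\overline{u}\,l')\cdot\tau'(l')}\,d\overline{u}\,dl'\,dg ,
\end{align*}
where the bracketed innermost integral over $[U^{+\prime}]$ is precisely $(\ref{Key})$ (with the argument $g u' \overline{u}\, l'$ in place of $g u'$; the additional factor $\overline{u}\, l'$ is irrelevant for the vanishing assertion, as it may be absorbed into $g$).

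First I would invoke Proposition~\ref{KeyComputation}. Since by hypothesis $\gamma'$ is \emph{not} in the image of the moment map $\phi'$, that proposition asserts that $(\ref{Key})$ vanishes identically, i.e.\ for every $\phi\in\omega$ and every value of the remaining variables. Substituting this zero back into the displayed expression shows that $P_{\gamma',\psi}(\theta(\phi,f),\phi_{\gamma'},\tau') = 0$ for all $\phi\in\S(Y_\A)$, $f\in\pi$, $\phi_{\gamma'}\in\omega_{\gamma'}$ and $\tau'$. Since $\Theta(\pi)$ is by definition spanned by the forms $\theta(\phi,f)$, it follows that $P_{\gamma',\psi}(\cdot,\phi_{\gamma'},\tau')$ is identically zero on $\Theta(\pi)$, which is the claim.

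The only point that genuinely requires care — and hence the closest thing to an obstacle — is the validity of the interchanges of integration in the initial unfolding, which is what allows the vanishing of the inner integral $(\ref{Key})$ to propagate to the whole period. This is supplied by the absolute convergence of $P_{\gamma',\psi}(\theta(\phi,f),\phi_{\gamma'},\tau')$, which follows from the cuspidality of $f$ (hence of $\theta(\phi,f)$, after the unfolding) together with the convergence statement recorded after the definition of the period, namely \cite[Proposition A.1.1(ix)]{BP}. Granting this, the theorem is a formal consequence of Proposition~\ref{KeyComputation} and requires no computation beyond what is already carried out there.
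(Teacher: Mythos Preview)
Your argument is correct and is exactly the paper's approach: the paper states that Theorem~\ref{KeyTheoremNotInImage} follows ``immediately'' from Proposition~\ref{KeyComputation}, and you have simply spelled out this immediate deduction. (A minor quibble: since $\overline{u}\,l'\in G'(\A)$ sits to the \emph{right} of the integration variable $u'$, it is more naturally absorbed into $\phi$ via the Weil representation action than into $g$; but Proposition~\ref{KeyComputation} holds for all $\phi$, so the conclusion is unaffected.)
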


\noindent This is the analogous global result to \cite[Theorem 3.7(a)]{Zh}.

\vskip 5pt

In what follows, to simplify the notation, we notate the summation \[ \sum_{ \substack{S_{-r}\in  \Hom(V_{-r}, V'_{-r})(k), \\ \cdots \cdots  \\ S_{-1}\in  \Hom(V_{-1}, V'_{-1})(k), \\ y_{(0),(0)} \in Y_{(0),(0)}(k) }} \] by \[ \sum_{ \substack{S_{j}\in  H_j(k),  -r\le j\le -1  \\ y_{(0),(0)} \in Y_{(0),(0)}(k) }}. \]

\section{Relating two periods}\label{sec:RelatePeriods}

Recall from the start of Section \ref{sec:KeyComputation} that the period under computation can be written as \begin{align*}
& P_{\gamma',\psi}(\theta(\phi,f), \phi_{\gamma'}, \tau') \\
= &\int_{[G]} \overline{f(g)} \int_{[L']}  \int_{[U'/U^{+\prime}]}  \int_{[U^{+\prime}]} \theta(\phi)(g u^{+\prime}\overline{u'}l') \cdot \overline{\chi_{\gamma'}(u^{+\prime})}\mathop{du^{+\prime} } \overline{\theta_{\gamma'}(\phi_{\gamma'})(\overline{u'}l') \cdot \tau'(l') } \mathop{d\overline{u'}} \mathop{dl'} \mathop{dg} 
\end{align*}

The conclusion of Section \ref{sec:KeyComputation} is that one may replace the inner integral with the expression in Proposition \ref{KeyComputation}:

\begin{align*}
& P_{\gamma',\psi}(\theta(\phi,f), \phi_{\gamma'}, \tau') \\ \\
= &\int_{[G]} \overline{f(g)} \int_{[L']}  \int_{[U'/U^{+\prime}]} \sum_{g_{c}\in LU(k)\bs G(k)}  \\ & \sum_{ \substack{S_{j}\in  H_j(k), -r\le j\le -1  \\ y_{(0),(0)} \in Y_{(0),(0)}(k) }} (\omega( g_c g \overline{u'}l')\phi)(f_r)\cdots(y_{(0),(0)}) \cdot \overline{\theta_{\gamma'}(\phi_{\gamma'})(\overline{u'}l') \cdot\tau'(l') } \mathop{d\overline{u'}} \mathop{dl'} \mathop{dg} \\ \\
= & \int_{[G]}\sum_{g_{c}\in LU(k)\bs G(k)} \overline{f(g)} \int_{[L']}  \int_{[U'/U^{+\prime}]}  \\ & \sum_{ \substack{S_{j}\in  H_j(k), -r\le j\le -1  \\ y_{(0),(0)} \in Y_{(0),(0)}(k) }} (\omega( g_c g \overline{u'}l')\phi)(f_r)\cdots(y_{(0),(0)})\cdot \overline{\theta_{\gamma'}(\phi_{\gamma'})(\overline{u'}l') \cdot\tau'(l') } \mathop{d\overline{u'}} \mathop{dl'} \mathop{dg} \\ \\
= &\int_{LU(k)\bs G(\A)} \overline{f(g)} \int_{[L']}  \int_{[U'/U^{+\prime}]}  \\ & \sum_{ \substack{S_{j}\in  H_j(k), -r\le j\le -1  \\ y_{(0),(0)} \in Y_{(0),(0)}(k) }} (\omega( g \overline{u'}l')\phi)(f_r)\cdots(y_{(0),(0)}) \cdot\overline{\theta_{\gamma'}(\phi_{\gamma'})(\overline{u'}l') \cdot\tau'(l') } \mathop{d\overline{u'}} \mathop{dl'} \mathop{dg} \\ \\
= &\int_{LU(\A)\bs G(\A)} \int_{LU(k)\bs LU(\A)}  \overline{f(ul g)} \int_{[L']}  \int_{[U'/U^{+\prime}]}  \\ & \sum_{ \substack{S_{j}\in  H_j(k), -r\le j\le -1  \\ y_{(0),(0)} \in Y_{(0),(0)}(k) }} (\omega( ul g \overline{u'}l')\phi)(f_r)\cdots(y_{(0),(0)}) \cdot\overline{\theta_{\gamma'}(\phi_{\gamma'})(\overline{u'}l') \cdot\tau'(l') } \mathop{d\overline{u'}} \mathop{dl'}  \mathop{dl du} \mathop{dg} 
\end{align*}

where here we have performed a standard rewriting of the integral. 

\vskip 5pt

It is clear that what remains is to evaluate the respective integrals over $[U'/U^{+\prime}]$ and $[L']$. To do so, we first need to understand better the inner theta series 
\begin{equation}\label{InnerThetaSeries} \sum_{ \substack{S_{j}\in  H_j(k), -r\le j\le -1  \\ y_{(0),(0)} \in Y_{(0),(0)}(k) }}(\omega( ul g \overline{u'}l')\phi)(f_r)(f_{-r}+S_{-r})\cdots(f_{-1}+S_{-1})(f_0)(y_{(0),(0)}) \end{equation}
coming from Section \ref{sec:KeyComputation}.

\subsection{} Note that one may identify $\g_{-1}$ with the subspace of elements \[ \sum_j x_j\in \bigoplus_{j} \Hom(V_j, V_{j-1}) \] satisfying $x_j^\ast = -x_{-j+1}$ for all $j\ge 1$. 
One has the similar identification for $\g'_{-1}$. 

Recall also that one has the symplectic subspace $\Hom(V_{new}, V^{1\prime})$ of $\Hom(V,V')$, as at the end of Section \ref{NilpOrbitTransfer}.

Then one has the following key lemma (which is shown by a straightforward direct computation):

\begin{lem} \cite[Lemma 3.13]{Zh}
    There is an isomorphism of symplectic vector spaces \begin{equation}\label{KeyIsomorphismSymplecticVectorSpaces}
        -\g_{-1}\oplus \g'_{-1} \oplus \Hom(V_{new}, V^{1\prime}) \cong \bigoplus_{j=-r}^r \Hom(V_{j},V'_j) =: \mathbb{S}_{\gamma,\gamma'}. 
    \end{equation}  
    Here $-\g_{-1}$ is $\g_{-1}$ with its symplectic form negated, and we view $\Hom(V_{new}, V^{1\prime})$ and $\mathbb{S}_{\gamma,\gamma'}$ respectively as subspaces of $\Hom(V,V')$ with the inherited symplectic form. 
    
    The isomorphism on $-\g_{-1}$ is given by $x\mapsto fx$, where $f\in \Hom(V,V')$ is as in Proposition \ref{prop:TransferNilpOrbit}, and similarly the isomorphism on $\g'_{-1}$ is given by $x'\mapsto x'f$. 

\end{lem}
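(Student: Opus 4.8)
\emph{Overview.} The plan is to exhibit the isomorphism explicitly, check it maps into $\mathbb{S}_{\gamma,\gamma'}$, verify it intertwines the symplectic forms (the main step), deduce injectivity for free, and settle surjectivity by a dimension count. Concretely, define
\[
\Phi\colon\ -\g_{-1}\ \oplus\ \g'_{-1}\ \oplus\ \Hom(V_{new},V^{1\prime})\ \longrightarrow\ \Hom(V,V'),\qquad \Phi(x,x',g)=fx+x'f+\iota(g),
\]
with $f\in\Hom(V,V')$ as in \propref{prop:TransferNilpOrbit} and $\iota$ the tautological inclusion of $\Hom(V_{new},V^{1\prime})$ into $\Hom(V,V')$, i.e. a map $V_{new}=\ker f\to V^{1\prime}\subseteq V'_0$ is extended by zero across the nondegenerate complement $(\ker f)^\perp\subseteq V$. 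On the first two summands this is exactly the prescription of the lemma, and on the third it is the subspace inclusion used in the statement. Since $f$, and hence $f^\ast$, raises the $\mfsl_2$-weight by one (\propref{prop:TransferNilpOrbit}) while elements of $\g_{-1}$ and $\g'_{-1}$ lower it by one, the three summands are all weight-preserving as maps $V\to V'$; therefore $\Phi$ lands in $\bigoplus_j\Hom(V_j,V'_j)=\mathbb{S}_{\gamma,\gamma'}$ (the slots with $|j|>r$ are absent since $V_j=0$ there).

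\emph{Intertwining the symplectic forms.} This is the heart of the matter. I would write the symplectic pairing on $\Hom(V,V')$ and its subspaces via the trace, record the pairing on $\g_{-1}$ as $\langle v,w\rangle=\kappa(\mathrm{ad}(e)v,w)$ with $\kappa=\tfrac12\Tr(\,\cdot\,\cdot\,)$ on $\g\subseteq\End(V)$ (and likewise on $\g'_{-1}$), and then compute, using: the moment-map identities $f^\ast f=e$ and $ff^\ast=e'$; the anti-self-adjointness of elements of $\g$, $\g'$; and cyclicity of the trace. For example $\langle fx,fy\rangle$ reduces to $\tfrac12\Tr(x^\ast f^\ast f y-y^\ast f^\ast f x)=\tfrac12\Tr(x^\ast e y-y^\ast e x)$, which rearranges (via $x^\ast=-x$, $y^\ast=-y$) to $-\langle x,y\rangle_{\g_{-1}}$ — the appearance of this sign is exactly what dictates the $-1$-twist on the $\g_{-1}$-summand; the parallel computation for $x'\mapsto x'f$ uses $ff^\ast=e'$ and produces an honest isometry; and $\iota$ is isometric by the definition of the inherited form. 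It then remains to check that the three images are pairwise symplectically orthogonal: the mixed pairings $\langle fx,y'f\rangle$ vanish by the same kind of trace manipulation once the defining relations $x_j^\ast=-x_{-j+1}$ and $(x'_j)^\ast=-x'_{-j+1}$ are fed in, so that the graded contributions cancel; and pairings with $\iota(g)$ vanish because $\iota(g)$ is supported on $\ker f$ (which $f$ kills, so $fx$ does not see it) and has image in $V^{1\prime}$, whereas $x'f$ lands in the $e'$-image part of $V'_0$, which is $B'$-orthogonal to $V^{1\prime}$ inside $V'_0$.

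\emph{Bijectivity.} Once $\Phi$ is known to preserve the (nondegenerate) symplectic forms, injectivity is automatic, as $\ker\Phi$ lies in the radical of the source form. For surjectivity I would run a dimension count: $\dim\g_{-1}=\sum_{j\ge1}\dim V_j\dim V_{j-1}$, similarly for $\g'_{-1}$, and $\dim\mathbb{S}_{\gamma,\gamma'}=\sum_j\dim V_j\dim V'_j$; plugging in the weight multiplicities read off the Young diagrams and invoking that $\lambda$ is obtained from $\lambda'$ by deleting the first column and appending $\dim V_{new}$ rows of length one (\secref{NilpOrbitTransfer}), the two totals match, the appended rows contributing exactly $\dim V_{new}\cdot\dim V^{1\prime}$. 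Alternatively, given $T\in\Hom(V_j,V'_j)$, one solves $f_{j-1}x_j+x'_{j+1}f_j+\iota(g)_j=T$ slotwise, using that $f_m$ is an isomorphism for $m>0$ and is onto $V'_1$ with kernel $V_{new}$ for $m=0$, and transports choices to negative weights through the $\g_{-1}$, $\g'_{-1}$ relations — this also exhibits the splitting of $V'_0$ into the $e'$-image and $V^{1\prime}$, which is precisely what the $\Hom(V_{new},V^{1\prime})$-factor accounts for.

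\emph{Expected obstacle.} The genuine work is the bookkeeping in the symplectic-form step: keeping the various identifications mutually compatible — the form on $\g_{-1}$ through $\kappa$ and $\mathrm{ad}(e)$, the inherited form on $\Hom(V_{new},V^{1\prime})\subseteq\Hom(V,V')$, and the adjoint conventions relating $^\ast$ on $\Hom(V,V')$ to $^\ast$ on $\End(V)$, $\End(V')$ — so that every sign (in particular the $-1$ on $\g_{-1}$) and the vanishing of every cross term come out as asserted. By contrast the dimension count, while it genuinely uses the transfer combinatorics of \secref{NilpOrbitTransfer}, is routine.
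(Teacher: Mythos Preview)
Your proposal is correct and matches what the paper does: the paper gives no argument of its own beyond the parenthetical ``(which is shown by a straightforward direct computation)'' and a citation to \cite[Lemma 3.13]{Zh}, and your outline is precisely that direct computation --- define $\Phi$, check it is weight-preserving, verify the symplectic compatibility via the trace identities $f^\ast f=e$, $ff^\ast=e'$, and finish by a dimension count. One small tightening: your vanishing argument for the cross terms with $\iota(g)$ is cleanest when phrased via $V^{1\prime}\subseteq\ker f^\ast$ (equivalently $\mathop{\im} f\perp V^{1\prime}$, which follows from $ff^\ast=e'$ and $e'|_{V^{1\prime}}=0$) and $\mathop{\im}\iota(g)^\ast\subseteq V_{new}=\ker f$; this makes both $\Tr\big((fx)^\ast\iota(g)\big)$ and $\Tr\big(f\,\iota(g)^\ast x'\big)$ vanish on the nose, whereas the phrase ``$x'f$ lands in the $e'$-image part of $V'_0$'' is not literally true for arbitrary $x'\in\g'_{-1}$.
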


Note that $L$ and $L'$ act on $V_0$ and $V'_0$ respectively and hence naturally on $\mathbb{S}_{\gamma,\gamma'}$; it may be checked easily that this is equivariant for the isomorphism (\ref{KeyIsomorphismSymplecticVectorSpaces}). 

Also, there are maps from $U$ and $U'$ into the Heisenberg group associated to $\mathbb{S}_{\gamma,\gamma'}$, via the symplectic embeddings from $-\g_{-1}$ and $\g'_{-1}$ into $\mathbb{S}_{\gamma,\gamma'}$ respectively. 

Let $\Omega_{\gamma,\gamma'}$ be the Heisenberg-Weil representation associated to the symplectic space $\mathbb{S}_{\gamma,\gamma'}$. We hence see that there is an action of $LU\times L'U'$ on $\Omega_{\gamma,\gamma'}$, which is that of the Heisenberg-Weil representation pulled back to $LU\times L'U'$. 

One similarly has the key

\begin{lem}\label{lem:KeyWeilSplit}
There is a $LU\times L'U'$-equivariant isomorphism \begin{equation}\label{KeyWeilSplit} \Omega_{\gamma,\gamma'} \cong \omega_{-\gamma} \otimes \omega_{\gamma'} \otimes \omega_{\gamma,\gamma'}, \end{equation} where here: 
\begin{itemize}
    \item $-\gamma$ corresponds to the $\mfsl_2$-triple $\{-e,h,-f\}$, if $\gamma$ corresponds to the $\mfsl_2$-triple $\{e,h,f\}$; one has $\omega_{-\gamma}\cong \omega_{\gamma,\overline{\psi}}$;
    \item $\omega_{\gamma,\gamma'}$ is the Weil representation associated to $\Hom(V_{new}, V^{1\prime})$, as at the end of Section \ref{NilpOrbitTransfer}.
\end{itemize}
\end{lem}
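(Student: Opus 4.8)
The plan is to deduce Lemma~\ref{lem:KeyWeilSplit} from the preceding isomorphism of symplectic vector spaces \eqref{KeyIsomorphismSymplecticVectorSpaces} by invoking the basic functoriality of the Heisenberg--Weil construction. Recall that if a symplectic space $\mathbb{S}$ decomposes as an orthogonal direct sum $\mathbb{S}_1 \oplus \mathbb{S}_2 \oplus \mathbb{S}_3$ of symplectic subspaces, then the associated Heisenberg group is a quotient of the product of the three Heisenberg groups (amalgamating the centres), and the Weil representation factors accordingly as $\Omega \cong \Omega_1 \otimes \Omega_2 \otimes \Omega_3$ — this is visible at the level of Schwartz spaces by choosing a Lagrangian of $\mathbb{S}$ that is the direct sum of Lagrangians of the $\mathbb{S}_i$. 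So first I would record that \eqref{KeyIsomorphismSymplecticVectorSpaces} exhibits $\mathbb{S}_{\gamma,\gamma'}$ as such an orthogonal sum $(-\g_{-1}) \oplus \g'_{-1} \oplus \Hom(V_{new},V^{1\prime})$, hence immediately gives a tensor-product factorisation of $\Omega_{\gamma,\gamma'}$; it then remains only to identify each tensor factor with the named Weil representation and to check the claimed equivariance.

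Second, I would identify the three factors. For the third factor this is a tautology: $\omega_{\gamma,\gamma'}$ was \emph{defined} at the end of Section~\ref{NilpOrbitTransfer} as the Weil representation attached to $\Hom(V_{new},V^{1\prime})$ with its inherited symplectic form, which is exactly the summand appearing in \eqref{KeyIsomorphismSymplecticVectorSpaces}. For the first two factors I would recall from Section~\ref{Periods} that $\omega_{\gamma'}$ is by definition the Heisenberg--Weil representation of $U'$ realised on Schwartz functions on a Lagrangian of the symplectic space $\g'_{-1}$ (with its $M_{\gamma'}$-invariant form $\langle v,w\rangle = \kappa'(\mathrm{ad}(e')v,w)$), and similarly $\omega_\gamma$ for $U$ on $\g_{-1}$; negating the symplectic form on $\g_{-1}$ replaces $\psi$ by $\overline\psi$ in the Heisenberg cocycle, which is precisely the standard fact $\omega_{-\gamma}\cong \omega_{\gamma,\overline\psi}$, and one checks directly that the $\mfsl_2$-triple $\{-e,h,-f\}$ has $\mathrm{ad}(-e) = -\mathrm{ad}(e)$ so that its associated symplectic form on $\g_{-1}$ is indeed the negated one. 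Thus the three tensor factors are $\omega_{-\gamma}$, $\omega_{\gamma'}$, $\omega_{\gamma,\gamma'}$ as claimed.

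Third, I would verify the $LU \times L'U'$-equivariance. The remark following \eqref{KeyIsomorphismSymplecticVectorSpaces} already notes that $L$, $L'$ act on $\mathbb{S}_{\gamma,\gamma'}$ compatibly with the isomorphism (via their actions on $V_0$, $V'_0$), and that $U$, $U'$ map into the Heisenberg group of $\mathbb{S}_{\gamma,\gamma'}$ through the symplectic embeddings of $-\g_{-1}$ and $\g'_{-1}$; hence the pulled-back Heisenberg--Weil action of $LU\times L'U'$ on $\Omega_{\gamma,\gamma'}$ is defined. Under the factorisation, $U$ acts only through the $-\g_{-1}$-summand, i.e.\ only on the $\omega_{-\gamma}$-factor, via exactly the Heisenberg action defining $\omega_{-\gamma}$; likewise $U'$ acts only on $\omega_{\gamma'}$. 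For $L$ and $L'$ one uses that they normalise the respective Heisenberg data and act on all three summands of $\mathbb{S}_{\gamma,\gamma'}$ — on $\g_{-1}$, on $\g'_{-1}$ (through the natural $M_\gamma$-, $M_{\gamma'}$-actions restricted to $L$, $L'$), and on $V_{new}$, $V^{1\prime}$ — so that the metaplectic action respects the tensor decomposition; this is the content to be checked, but it is a direct computation with the explicit embeddings $x\mapsto fx$ and $x'\mapsto x'f$ of the preceding lemma together with the $L$-, $L'$-equivariance statement already recorded there.

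The main obstacle I anticipate is purely bookkeeping: pinning down the metaplectic normalisations (i.e.\ the precise $2$-cocycles / choices of Lagrangians) so that the isomorphism \eqref{KeyWeilSplit} is genuinely equivariant on the nose rather than up to a character, and in particular making the $\omega_{-\gamma}\cong\omega_{\gamma,\overline\psi}$ identification cleanly. There is no conceptual difficulty — everything follows from the functoriality of the Weil representation under orthogonal direct sums of symplectic spaces applied to \eqref{KeyIsomorphismSymplecticVectorSpaces} — but since this is exactly the kind of point where sign and character conventions proliferate, I would be careful to fix a Lagrangian of $\mathbb{S}_{\gamma,\gamma'}$ compatible with the chosen Lagrangians of $\g_{-1}$, $\g'_{-1}$ and $\Hom(V_{new},V^{1\prime})$ at the outset, which trivialises the compatibility of cocycles. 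As the statement itself indicates (``which is shown by a straightforward direct computation'' in the analogous \cite[Lemma 3.13]{Zh}), I would present this largely by reduction to that reference for the vector-space isomorphism and to standard Weil-representation functoriality for the splitting, spelling out only the identification of factors and the $U$-, $U'$-actions.
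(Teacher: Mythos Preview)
Your proposal is correct and matches the paper's approach: the paper gives no explicit proof of this lemma, introducing it only with ``One similarly has the key'' immediately after the symplectic vector space isomorphism \eqref{KeyIsomorphismSymplecticVectorSpaces}, so the intended argument is precisely the standard functoriality of the Heisenberg--Weil construction under orthogonal direct sums that you spell out. Your identification of the three tensor factors and the equivariance check are exactly what the paper leaves implicit.
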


Observe that $\Omega_{\gamma,\gamma'}$ may be realised on the space of Schwarz functions on the Lagrangian \[ \Y_{\gamma,\gamma'} := \bigoplus_{j=-r}^{-1} \Hom(V_{j},V'_j) \oplus Y_{(0),(0)} \] of $\mathbb{S}_{\gamma,\gamma'}$.

\vskip 10pt

Following \cite[Section 6.3]{GZ}, we now make the following definition:

\begin{definition}
Let $\Phi(g)$ denote the Schwarz function on $\Y_{\gamma,\gamma'}$, and hence an element of $\Omega_{\gamma,\gamma'}$, defined by \begin{equation}\label{Phi} \Phi(g)(S_{-r},\cdots,S_{-1},y_{(0),(0)}) := (\omega(g)\phi)(f_r, f_{-r}+S_{-r},\cdots,f_{-1}+S_{-1},f_0, y_{(0),(0)}). \end{equation}

Here, note of course that the $f_j$ are all elements over the field $k$, so that $\Phi(g)$ is still a well-defined Schwarz function on the adelic space $\Y_{\gamma,\gamma'}(\A)$.
\end{definition}

\vskip 5pt

\begin{lem}\label{lem:LULUEquivariance1}
For fixed $g$, the map \begin{align*}
    \omega&\longrightarrow \Omega_{\gamma,\gamma'} \\
    \omega(g)\phi &\longmapsto \Phi(g)
\end{align*}
is $LU\times L'U'$-equivariant.
\end{lem}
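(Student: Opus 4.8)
The map in the statement does not in fact depend on $g$: writing $\eta = \omega(g)\phi$, the defining formula (\ref{Phi}) says precisely that $\Phi(g)$ is the restriction of the Schwarz function $\eta$ (on $Y_{(r),(r+1)}$) along the affine embedding sending $(S_{-r},\dots,S_{-1},y_{(0),(0)})$ to $(f_r, f_{-r}+S_{-r},\dots,f_{-1}+S_{-1},f_0,y_{(0),(0)})$ of $\Y_{\gamma,\gamma'}$ into $Y_{(r),(r+1)}$. So the map is the linear ``restriction to the slice'' map $R\colon \omega \to \Omega_{\gamma,\gamma'}$, well-defined because the $f_j$ are $k$-rational and the restriction of a Schwarz function to a closed affine subspace is again Schwarz. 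Its $LU\times L'U'$-equivariance amounts to the identity $R\circ\omega(h) = \Omega_{\gamma,\gamma'}(h)\circ R$ for all $h\in LU\times L'U'$ (equivalently $\Phi(hg) = \Omega_{\gamma,\gamma'}(h)\Phi(g)$; here $\Omega_{\gamma,\gamma'}(h)$ denotes, by slight abuse, the pulled-back Heisenberg-Weil action on $\Omega_{\gamma,\gamma'}$). Since $\omega$ and $\Omega_{\gamma,\gamma'}$ are genuine representations, the identity for $h_1$ and $h_2$ implies it for $h_1h_2$, so it suffices to verify it for $h$ ranging over a generating set of $LU\times L'U'$, for instance the Levi factors $L$, $L'$ together with the unipotent pieces $U_1,\dots,U_r$ and $U'_1,\dots,U'_{r+1}$.

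For $h=l\in L$ (and symmetrically for $l'\in L'$) the verification is ``linear''. Since $L\subseteq G_{(0)}\subseteq G_{(r)}$, iterating (\ref{Graction}) shows that $\omega(l)$ acts only through the innermost factor $\omega_{(0),(0)}$, by the action of $l$ on $\Hom(V_0,V'_0)$ induced from its action on $V_0$; all the coordinates $f_m$ and $f_{-m}+S_{-m}$ with $m\geq 1$ are left untouched, because $l$ acts trivially on $V_j$ for $j\neq 0$ and $f_m$ kills $V_0$, while the coordinate $f_0$ is preserved precisely because $L$ is by definition the stabiliser of $f_0$ in $G_{(0)}$. On the other side, $L$ acts on $\mathbb{S}_{\gamma,\gamma'}$ trivially on the summands $\Hom(V_j,V'_j)$ with $j\neq 0$ and by precomposition on $\Hom(V_0,V'_0)$; under the identification of $\Y_{\gamma,\gamma'}$ supplied by $R$ these two actions agree, which is the claim.

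For the unipotent generators one re-runs the unfolding of Section \ref{sec:KeyComputation}, now restricted to the fixed slice rather than summed over its translates. For $h\in U_m$ one uses (\ref{Graction}) (and, at the lower levels, (\ref{Gr-1action}), (\ref{Uraction})); for $h\in U'_m$ one uses (\ref{Zr+1action}), (\ref{Rr+1action}) and their $G'$-analogues. One checks, first, that $h$ fixes each of the coordinates $f_r,\dots,f_0$ — this is the content of Steps 1--4 (for instance $f_r u = f_r$ for $u\in U$, since $u$ preserves the flag $\{\overline{V}_t\}$ stabilised by $P$ while $f_r$ annihilates $\overline{V}_{r-1}$) — and, second, that the residual action on the free coordinates $(S_{-r},\dots,S_{-1},y_{(0),(0)})$ is precisely the translation-and-quadratic-phase prescribed by the Heisenberg-Weil action on $\Omega_{\gamma,\gamma'}$ of the image of $h$ in the Heisenberg group of $\mathbb{S}_{\gamma,\gamma'}$, recalling that $u\in U$ maps in via $x\mapsto fx$ (with $x\in\g_{-1}$ its image) and $u'\in U'$ via $x'\mapsto x'f$ (with $x'\in\g'_{-1}$ its image), as in (\ref{KeyIsomorphismSymplecticVectorSpaces}). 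For the part of $U'_m$ lying in $U^{+\prime}$ the two sides match by the matching of characters already carried out in Section \ref{sec:KeyComputation}. Altogether this is a direct computation, entirely parallel to \cite[Section 6.3]{GZ}.

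The step I expect to be the main obstacle is the second point above: reconciling the Heisenberg cocycle coming from the iterated realisation of $\omega_{(r),(r+1)}$ along the chain of Lagrangians chosen in Section \ref{WeilRep} with the Heisenberg-Weil action on $\Omega_{\gamma,\gamma'}$ realised on $\Y_{\gamma,\gamma'}$. The delicate points are the sign conventions — notably the negation of the symplectic form on the $\g_{-1}$-copy in (\ref{KeyIsomorphismSymplecticVectorSpaces}), which is what makes the $U$-phases come out correctly and is the source of the $-\gamma$ appearing in Lemma \ref{lem:KeyWeilSplit} — the component-by-component bookkeeping of how $u\in U$ and $u'\in U'$ move the slice (as $f_{-m}+S_{-m}$ occupies only a specific small subspace of $\Hom(V_{-m},V'_{(m)})$, with $f_{-m}$ fixed and $S_{-m}$ free), and the check that these polarisations are compatible with the Lagrangian $\Y_{\gamma,\gamma'}$, so that no spurious metaplectic scalar enters.
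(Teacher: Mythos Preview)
Your proposal is correct in outline, and in fact it unpacks what the paper's own proof consists of: a bare citation to \cite[Proposition 6.5]{GZ} (or \cite[Proposition 3.8]{Zh}). What you sketch --- reducing to generators, handling $L$ and $L'$ via the stabiliser property of $f_0$ and the passage formulas (\ref{Graction}), (\ref{Gr-1action}), and handling the unipotent pieces via the explicit Weil-representation formulas --- is precisely the structure of the argument in those references. Your observation that the map is really a $g$-independent restriction map $R$ is a helpful clarification that the paper leaves implicit.

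Your final paragraph correctly identifies the substance of the matter: the matching of the Heisenberg cocycle arising from the iterated mixed-model realisation of $\omega$ along the chain of Lagrangians in Section \ref{WeilRep} with the intrinsic Heisenberg-Weil action on $\Omega_{\gamma,\gamma'}$ via the isomorphism (\ref{KeyIsomorphismSymplecticVectorSpaces}) is the entire content of \cite[Section 6.3]{GZ} (see in particular their Lemmas 6.6--6.7), and there is no shortcut. So your write-up is best read as an informative gloss on the citation rather than a self-contained proof; to make it stand alone you would have to carry out that component-by-component bookkeeping in full, which the paper sensibly delegates to the local references.
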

\begin{proof}
This follows from \cite[Proposition 6.5]{GZ}, or \cite[Proposition 3.8]{Zh}. 
\end{proof}

\vskip 5pt

\subsection{Theta series} By the preceding Lemma \ref{lem:LULUEquivariance1}, one may therefore write 
\begin{align}  
    (\ref{InnerThetaSeries}) =  \sum_{y_{\gamma,\gamma'} \in \Y_{\gamma,\gamma'}(k) } (\Omega_{\gamma,\gamma'}(ul\overline{u'}l')\Phi(g))(y_{\gamma,\gamma'}) \nonumber
\end{align}

for $\Phi(g)\in\Omega_{\gamma,\gamma'}$. 

\vskip 5pt

Now, since $\Phi(g)\in\Omega_{\gamma,\gamma'}$, then using Lemma \ref{lem:KeyWeilSplit}, let us by an abuse of notation\footnote{in other words, we omit the relevant canonical intertwiners of the Heisenberg-Weil representation $\Omega_{\gamma,\gamma'}$, so as to keep the notation manageable} view $\Phi(g)$ as an element of $\omega_{-\gamma} \otimes \omega_{\gamma'} \otimes \omega_{\gamma,\gamma'}$.

That is, we view $\Phi(g)$ as a Schwarz function on $Y_{-1}\oplus Y'_{-1}\oplus Y_{\gamma,\gamma'}$, for Lagrangian subspaces $Y_{-1}, Y'_{-1}, Y_{\gamma,\gamma'}$ of $\g_{-1},\g'_{-1},\Hom(V_{new}, V^{1\prime})$ respectively. 

We thus write \begin{align}  
    (\ref{InnerThetaSeries}) =  \sum_{\substack{y_{\gamma} \in Y_{-1}(k) \\ y_{\gamma'} \in Y'_{-1}(k) \\ y_{\gamma,\gamma'} \in Y_{\gamma,\gamma'}(k)} } (\Omega_{\gamma,\gamma'}(ul\overline{u'}l')\Phi(g))(y_{\gamma}+y_{\gamma'}+y_{\gamma,\gamma'}) \nonumber
\end{align}

\vskip 5pt

\subsection{Integral over $[U'/U^{+\prime}]$} 

In order to evaluate the inner integral over $[U'/U^{+\prime}]$, the pertinent evaluation is that of
\begin{equation}
\label{g'1Integral}
\int_{[U'/U^{+\prime}]} \sum_{\substack{y_{\gamma} \in Y_{-1}(k) \\ y_{\gamma'} \in Y'_{-1}(k) \\ y_{\gamma,\gamma'} \in Y_{\gamma,\gamma'}(k)} } (\Omega_{\gamma,\gamma'}(ul\overline{u'}l')\Phi(g))(y_{\gamma}+y_{\gamma'}+y_{\gamma,\gamma'}) \cdot \overline{\theta_{\gamma'}(\phi_{\gamma'})(\overline{u'}l')} \mathop{d\overline{u'}}
\end{equation}

As the theta series here involve the Heisenberg group action of $U'$ on the Heisenberg-Weil representation $\omega_{\gamma'}$, one should break the integral over maximally isotropic complementary subspaces of $\g'_{-1}=X'_{-1}\oplus Y'_{-1}$, supposing that $\omega_{\gamma'}$ is realised on the space of Schwarz functions on $Y'_{-1}$.

One has 
\begin{align*}
(\ref{g'1Integral}) = &\int_{[Y'_{-1}]}\int_{[X'_{-1}]} \sum_{\substack{ y_{\gamma} \in Y_{-1}(k) \\ y_{\gamma'} \in Y'_{-1}(k) \\ y_{\gamma,\gamma'} \in Y_{\gamma,\gamma'}(k) \\ y_{\gamma',2} \in Y'_{-1}(k)} } \psi(\langle x', y_{\gamma'}\rangle - \langle x', y_{\gamma',2}\rangle) \\ &\cdot (\Omega_{\gamma,\gamma'}(ull') \Phi(g) )(y_{\gamma}+y_{\gamma'}+y'+y_{\gamma,\gamma'}) \cdot \overline{ (\omega_{\gamma'}(l') \phi_{\gamma'})(y_{\gamma',2}+y') } \mathop{dx' dy'}
\end{align*}

Evaluating first the inner integral over $[X'_{-1}]$, again observe that only the terms with $y_{\gamma'}=y_{\gamma',2}$ contribute to the sum, and we obtain: 

\begin{align*}
(\ref{g'1Integral}) &= \int_{[Y'_{-1}]} \sum_{\substack{ y_{\gamma} \in Y_{-1}(k) \\ y_{\gamma'} \in Y'_{-1}(k) \\ y_{\gamma,\gamma'} \in Y_{\gamma,\gamma'}(k) }} (\Omega_{\gamma,\gamma'}(ull') \Phi(g) )(y_{\gamma}+y_{\gamma'}+y'+y_{\gamma,\gamma'}) \cdot \overline{ (\omega_{\gamma'}(l') \phi_{\gamma'})(y_{\gamma'}+y') } \mathop{dy'} \\
&= \int_{Y'_{-1}(\A)} \sum_{\substack{ y_{\gamma} \in Y_{-1}(k) \\ y_{\gamma,\gamma'} \in Y_{\gamma,\gamma'}(k) }} (\Omega_{\gamma,\gamma'}(ull') \Phi(g) )(y_{\gamma}+y'+y_{\gamma,\gamma'}) \cdot \overline{ (\omega_{\gamma'}(l') \phi_{\gamma'})(y') } \mathop{dy'}
\end{align*}

\subsection{Integral over $[L']$} What remains is therefore the evaluation of the integral 
\begin{align}
\label{L'Integral}
&\int_{[L']} \int_{Y'_{-1}(\A)} \sum_{\substack{ y_{\gamma} \in Y_{-1}(k) \\ y_{\gamma,\gamma'} \in Y_{\gamma,\gamma'}(k) }} (\Omega_{\gamma,\gamma'}(ull') \Phi(g) )(y_{\gamma}+y'+y_{\gamma,\gamma'}) \cdot \overline{ (\omega_{\gamma'}(l') \phi_{\gamma'})(y') } \mathop{dy'} \cdot \overline{\tau'(l') }  \mathop{dl'} 
\end{align}

Define
\begin{equation*}
    \Big(\Phi(g)\ast \overline{  \phi_{\gamma'} } \Big)(y_{\gamma}+y_{\gamma,\gamma'})  := \int_{Y'_{-1}(\A)}  \Phi(g) (y_{\gamma}+y'+y_{\gamma,\gamma'}) \cdot \overline{  \phi_{\gamma'}(y') } \mathop{dy'} 
\end{equation*}

which is a Schwarz function on the Lagrangian $Y_{-1}\oplus Y_{\gamma,\gamma'}$, hence an element of $\omega_{-\gamma}\otimes \omega_{\gamma,\gamma'}$. 

Then it is straightforward to see that we may write
\begin{align*}
   (\ref{L'Integral}) &= \int_{[L']} \sum_{\substack{ y_{\gamma} \in Y_{-1}(k) \\ y_{\gamma,\gamma'} \in Y_{\gamma,\gamma'}(k) }}  \big(\Omega_{\gamma,\gamma'}(ull') ( \Phi(g)\ast \overline{  \phi_{\gamma'} })  \big)(y_{\gamma}+y_{\gamma,\gamma'})   \cdot \overline{\tau'(l') }  \mathop{dl'} 
\end{align*}

Write now \[ \Phi(g)\ast \overline{  \phi_{\gamma'}} = \sum \, (\Phi(g)\ast \overline{  \phi_{\gamma'}})_\gamma \otimes (\Phi(g)\ast \overline{  \phi_{\gamma'}})_{\gamma,\gamma'} \, \, , \] a sum of pure tensors in $\omega_{-\gamma}\otimes \omega_{\gamma,\gamma'}$. 

\vskip 5pt

Then it is clear that 
\begin{align*}
   (\ref{L'Integral}) &= \sum \, \theta_{-\gamma}\big( (\Phi(g)\ast \overline{  \phi_{\gamma'}})_\gamma \big)(ul) \cdot \theta_{\gamma,\gamma'}\Big(\big( (\Phi(g)\ast \overline{  \phi_{\gamma'}})_{\gamma,\gamma'}\big),\ \tau' \Big)(l)
\end{align*}

\subsection{Relating two periods}

The conclusion of the above discussion is that 

\begin{thm}\label{thm:KeyResult}
Retain all previous notation. Then
\begin{align}
\label{KeyResult}
& P_{\gamma',\psi}(\theta(\phi,f), \phi_{\gamma'}, \tau') \\
= &\int_{LU(\A)\bs G(\A)} \int_{LU(k)\bs LU(\A)}  \overline{f(ul g)}\cdot \sum \, \theta_{-\gamma}\big( (\Phi(g)\ast \overline{  \phi_{\gamma'}})_\gamma \big)(ul) \nonumber \\ & \cdot \theta_{\gamma,\gamma'}\Big(\big( (\Phi(g)\ast \overline{  \phi_{\gamma'}})_{\gamma,\gamma'}\big),\ \tau' \Big)(l)  \mathop{dl du} \mathop{dg}  \nonumber \\
= & \int_{LU(\A)\bs G(\A)} \sum \overline{P_{\gamma,\overline{\psi}}\bigg( g\cdot f,\ (\Phi(g)\ast \overline{  \phi_{\gamma'}})_\gamma  ,\ \theta_{\gamma,\gamma'}\Big((\Phi(g)\ast \overline{  \phi_{\gamma'}})_{\gamma,\gamma'} \, , \, \tau' \Big) \bigg) } \mathop{dg} \nonumber
\end{align}
\end{thm}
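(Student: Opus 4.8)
The plan is to assemble Theorem~\ref{thm:KeyResult} by simply collating the chain of equalities established in the preceding subsections and then recognising the resulting inner integral as the period $P_{\gamma,\overline{\psi}}$. First I would start from the expression for $P_{\gamma',\psi}(\theta(\phi,f),\phi_{\gamma'},\tau')$ obtained at the end of Section~\ref{sec:RelatePeriods} after the ``standard rewriting'' of the integral, namely the integral over $LU(\A)\bs G(\A)$ of an integral over $[LU]$ of $\overline{f(ulg)}$ against the inner theta series $(\ref{InnerThetaSeries})$ integrated against $\overline{\theta_{\gamma'}(\phi_{\gamma'})(\overline{u'}l')\tau'(l')}$ over $[U'/U^{+\prime}]$ and $[L']$. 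Then I would substitute in order: Lemma~\ref{lem:LULUEquivariance1} to rewrite $(\ref{InnerThetaSeries})$ as a theta series for $\Omega_{\gamma,\gamma'}$ with Schwarz function $\Phi(g)$; Lemma~\ref{lem:KeyWeilSplit} to split $\Omega_{\gamma,\gamma'}\cong\omega_{-\gamma}\otimes\omega_{\gamma'}\otimes\omega_{\gamma,\gamma'}$; the computation of the $[U'/U^{+\prime}]$-integral (which collapses the $\omega_{\gamma'}$-theta series against $\overline{\theta_{\gamma'}(\phi_{\gamma'})}$ to an integral over $Y'_{-1}(\A)$); and the computation of the $[L']$-integral (which produces $\theta_{-\gamma}$ and $\theta_{\gamma,\gamma'}(\cdot,\tau')$ via the convolution $\Phi(g)\ast\overline{\phi_{\gamma'}}$ and its expansion as a sum of pure tensors).

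Having done this, the integrand over $LU(\A)\bs G(\A)$ reads $\int_{LU(k)\bs LU(\A)}\overline{f(ulg)}\cdot\sum\theta_{-\gamma}\big((\Phi(g)\ast\overline{\phi_{\gamma'}})_\gamma\big)(ul)\cdot\theta_{\gamma,\gamma'}\big((\Phi(g)\ast\overline{\phi_{\gamma'}})_{\gamma,\gamma'},\tau'\big)(l)\mathop{dl\,du}$. The final step is to recognise this, for each fixed $g$ and each term of the sum, as $\overline{P_{\gamma,\overline{\psi}}}$ applied to the right-translate $g\cdot f$, the Schwarz function $(\Phi(g)\ast\overline{\phi_{\gamma'}})_\gamma\in\omega_{-\gamma}\cong\omega_{\gamma,\overline{\psi}}$, and the cusp form $l\mapsto\theta_{\gamma,\gamma'}\big((\Phi(g)\ast\overline{\phi_{\gamma'}})_{\gamma,\gamma'},\tau'\big)(l)$ on $L$. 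This matching uses the definition of $P_{\gamma,\psi}$ from Section~\ref{Periods} (with the roles of $U,L$ in place of $U',L'$, and $\overline{\psi}$ in place of $\psi$ because $\omega_{-\gamma}\cong\omega_{\gamma,\overline{\psi}}$), together with the observation that $\int_{[U]}f(ulg)\overline{\theta_{\gamma}(\cdot)(ul)}\mathop{du}$ unfolds correctly thanks to $U^{+}$ acting by $\chi_\gamma$ on $\omega_\gamma$; the complex conjugate appears because $f$, $\theta_{-\gamma}$ and $\theta_{\gamma,\gamma'}(\cdot,\tau')$ sit on the conjugated side of the pairing. One should also remark, as the paper does just before the definition of $P_{\gamma',\psi}$, that the resulting $L$-period converges absolutely by \cite[Proposition A.1.1(ix)]{BP} since $g\cdot f$ is cuspidal, which also legitimises all the interchanges of summation and integration used along the way.

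The main obstacle I anticipate is purely bookkeeping rather than conceptual: one must track carefully the various Lagrangian decompositions ($Y_{(0),(0)}$, $Y'_{-1}$, $Y_{-1}$, $Y_{\gamma,\gamma'}$) and the ``abuse of notation'' intertwiners suppressed after Lemma~\ref{lem:KeyWeilSplit}, and check that the Heisenberg actions of $U$ and $U'$ on $\Omega_{\gamma,\gamma'}$ really do restrict to the Heisenberg--Weil actions on $\omega_{-\gamma}$ and $\omega_{\gamma'}$ used in the $[U'/U^{+\prime}]$- and $[L']$-integral computations — this is where the equivariance in Lemma~\ref{lem:LULUEquivariance1} and the $L,L'$-equivariance of $(\ref{KeyIsomorphismSymplecticVectorSpaces})$ do the real work. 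The sign/character subtlety (getting $\overline{\psi}$ rather than $\psi$, and making sure the negated symplectic form on $-\g_{-1}$ is consistent with $\omega_{-\gamma}\cong\omega_{\gamma,\overline{\psi}}$) is the one place where an error could hide, so I would verify it explicitly on the formula $\chi_{\gamma,\psi}(\exp u)=\psi(\kappa(e,u))$ from $(\ref{defchi})$.
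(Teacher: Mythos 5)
Your proposal is correct and follows essentially the same route as the paper: Theorem \ref{thm:KeyResult} is proved there precisely by assembling the chain of equalities from the preceding subsections — the substitution of Proposition \ref{KeyComputation}, the standard rewriting over $LU(k)\bs G(k)$, Lemmas \ref{lem:KeyWeilSplit} and \ref{lem:LULUEquivariance1}, the collapse of the $[U'/U^{+\prime}]$- and $[L']$-integrals via the convolution $\Phi(g)\ast\overline{\phi_{\gamma'}}$ — and then recognising the inner $[LU]$-integral as $\overline{P_{\gamma,\overline{\psi}}}$ applied to $g\cdot f$. Your flagged bookkeeping concerns (the suppressed intertwiners, the $\overline{\psi}$ versus $\psi$ convention via $\omega_{-\gamma}\cong\omega_{\gamma,\overline{\psi}}$, and absolute convergence via \cite[Proposition A.1.1(ix)]{BP}) are exactly the points the paper itself treats as routine.
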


We obtain immediately
\begin{thm}\label{KeyTheorem}
Suppose $\gamma'$ is in the image of the moment map $\phi'$. Let $\pi$ be a cuspidal automorphic representation of $G$ and $\tau'$ a cusp form on $L'$. If $P_{\gamma',\psi}(\theta(\cdot,\cdot), \cdot, \tau')$ is not identically zero on $\Theta(\pi)$, then $P_{\gamma,\overline{\psi}}( \cdot, \cdot  ,\theta_{\gamma,\gamma'}(\cdot, \tau' ))$ is not identically zero on $\pi$.  
\end{thm}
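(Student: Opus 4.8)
The plan is to read off Theorem~\ref{KeyTheorem} directly from the identity~\eqref{KeyResult} of Theorem~\ref{thm:KeyResult}, which is already the substance of the key computation. Since $\gamma'$ is in the image of the moment map $\phi'$, Proposition~\ref{KeyComputation} applies and the chain of equalities culminating in~\eqref{KeyResult} is valid. Thus for every choice of $f\in\pi$, $\phi\in\omega$ and $\phi_{\gamma'}\in\omega_{\gamma'}$ we have
\[
P_{\gamma',\psi}(\theta(\phi,f),\phi_{\gamma'},\tau')
= \int_{LU(\A)\bs G(\A)} \sum \overline{P_{\gamma,\overline{\psi}}\Big( g\cdot f,\ (\Phi(g)\ast\overline{\phi_{\gamma'}})_\gamma,\ \theta_{\gamma,\gamma'}\big((\Phi(g)\ast\overline{\phi_{\gamma'}})_{\gamma,\gamma'},\tau'\big)\Big)}\,\mathop{dg}.
\]
So I would argue by contraposition: assume $P_{\gamma,\overline{\psi}}(\cdot,\cdot,\theta_{\gamma,\gamma'}(\cdot,\tau'))$ vanishes identically on $\pi$. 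I need to see that every integrand on the right-hand side then vanishes. The point is that for fixed $g\in G(\A)$, the translate $g\cdot f$ still lies in (the space of) $\pi$ (a cuspidal automorphic representation is closed under right translation), and $(\Phi(g)\ast\overline{\phi_{\gamma'}})_\gamma$ is an element of $\omega_{-\gamma}=\omega_{\gamma,\overline{\psi}}$, while $\theta_{\gamma,\gamma'}((\Phi(g)\ast\overline{\phi_{\gamma'}})_{\gamma,\gamma'},\tau')$ is an automorphic form on $L$. Hence each summand $P_{\gamma,\overline{\psi}}(g\cdot f,(\Phi(g)\ast\overline{\phi_{\gamma'}})_\gamma,\theta_{\gamma,\gamma'}(\cdots,\tau'))$ is, by hypothesis, zero. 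Therefore the whole integral vanishes, i.e. $P_{\gamma',\psi}(\theta(\phi,f),\phi_{\gamma'},\tau')=0$ for all $f,\phi,\phi_{\gamma'}$; since $\Theta(\pi)$ is spanned by the $\theta(\phi,f)$, this says exactly that $P_{\gamma',\psi}(\theta(\cdot,\cdot),\cdot,\tau')$ is identically zero on $\Theta(\pi)$, contradicting the assumption. This proves the theorem.

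Two small points deserve to be checked carefully but should be routine. First, one must make sure that $\theta_{\gamma,\gamma'}((\Phi(g)\ast\overline{\phi_{\gamma'}})_{\gamma,\gamma'},\tau')$ really is a legitimate input to the period $P_{\gamma,\overline{\psi}}$ — i.e. that it is an automorphic form on $L$ of the kind for which $P_{\gamma,\overline{\psi}}$ is defined (an automorphic, not necessarily cuspidal, form on $L$, which is allowed by the remark after the definition of $P_{\gamma',\psi}$ citing \cite[Proposition A.1.1(ix)]{BP}). Second, one should note that the hypothesis ``$P_{\gamma,\overline{\psi}}(\cdot,\cdot,\theta_{\gamma,\gamma'}(\cdot,\tau'))$ is identically zero on $\pi$'' is meant in the strong sense: it vanishes on all vectors $f_0\in\pi$, all Schwarz functions in $\omega_{\gamma,\overline{\psi}}$, and all inputs of the form $\theta_{\gamma,\gamma'}(\phi_{\gamma,\gamma'},\tau')$ — which is precisely what makes each summand on the right of~\eqref{KeyResult} vanish for every $g$.

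The argument is therefore essentially a formal consequence of Theorem~\ref{thm:KeyResult}; there is no real obstacle, the work having all been done in Section~\ref{sec:KeyComputation}. If anything, the one thing to be slightly careful about is the bookkeeping in the phrase ``not identically zero on $\Theta(\pi)$'': one should confirm that non-vanishing of $P_{\gamma',\psi}(\theta(\cdot,\cdot),\cdot,\tau')$ on $\Theta(\pi)$ is equivalent to non-vanishing of the functional $(f,\phi,\phi_{\gamma'})\mapsto P_{\gamma',\psi}(\theta(\phi,f),\phi_{\gamma'},\tau')$, which holds because the theta lifts $\theta(\phi,f)$ span $\Theta(\pi)$ by definition. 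With that observed, the contrapositive is immediate and the proof is complete.
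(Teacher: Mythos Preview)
Your argument is correct and is exactly the paper's approach: the paper simply writes ``We obtain immediately'' after stating the identity~\eqref{KeyResult}, and you have spelled out precisely why the implication is immediate via the contrapositive. The two small points you flag (that $\theta_{\gamma,\gamma'}(\cdot,\tau')$ is a legitimate input and that non-vanishing on $\Theta(\pi)$ amounts to non-vanishing for some $(f,\phi,\phi_{\gamma'})$) are indeed routine and are implicit in the paper's one-line deduction.
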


\vskip 5pt

\subsection{Even case}

Since the relation in (\ref{KeyResult}) is slightly unwieldy in full generality, it is instructive to explicate the relation in the even case. 

Suppose that $\gamma,\gamma'$ are both even, so that $\g_{-1},\g'_{-1}$ are both trivial. Suppose further also that $L'$ is trivial. These assumptions are only made so that one does not have to work with theta series in this case, and includes all the (even) `hook-type' cases as considered in \cite[Section 7]{GW}. 

Then in this case (\ref{KeyResult}) takes the simpler form 

\begin{align}
\label{KeyResultSimple}
& P_{\gamma',\psi}(\theta(\phi,f)) = \int_{[L']} \int_{[U']} \theta(\phi,f)(u'l') \cdot \overline{ \chi_{\gamma'}(u')  } \mathop{du'} \mathop{dl'} \\
= & \int_{LU(\A)\bs G(\A)} (\omega_{(r),(r+1)}(g)\phi)(f_r)(f_{-r})\cdots (f_1)(f_{-1})(f_0) \cdot \overline{P_{\gamma,\overline{\psi}}( g\cdot f) } \mathop{dg} \nonumber\\
\label{KeyResultSimpleIntegral} = & \int_{LU(\A)\bs G(\A)} \Phi(g) \cdot \overline{P_{\gamma,\overline{\psi}}( g\cdot f) } \mathop{dg} 
\end{align}

\vskip 5pt

\subsection{Converse of Theorem \ref{KeyTheorem}} One would now like to show the converse of Theorem \ref{KeyTheorem}. 

It is instructive to focus first on the even case, that is, we want to show that if $P_{\gamma,\overline{\psi}}( \cdot)$ is not identically zero on $\pi$, then $P_{\gamma',\psi}(\theta(\cdot,\cdot))$ is not identically zero on $\Theta(\pi)$. 

First, one may certainly suppose $P_{\gamma,\overline{\psi}}( f)\ne 0$, that is, \[ g\mapsto P_{\gamma,\overline{\psi}}( g\cdot f) \] is non-vanishing in a neighbourhood of the identity, and the aim is to show that one may choose $\Phi$ such that the integral (\ref{KeyResultSimpleIntegral}) is non-vanishing. 

Now since $\phi$ is an arbitrary Schwarz function, then analogously to \cite[Proposition 7.1]{CG}, the results of \cite[Proposition 6.5]{GZ} show that \[ \Phi : g\mapsto (\omega_{(r),(r+1)}(g)\phi)(f_r)(f_{-r})\cdots (f_1)(f_{-1})(f_0)\] is an arbitrary Schwarz function on $LU\bs G$, at least in the local setting. 

However, in the global setting, one does not immediately have the analogous conclusion to \cite[Proposition 7.1]{CG} (since $LU\bs G$ is not closed in $\Hom(V,V')$). What is pertinent here is therefore the computation at the unramified places. 

Let $S$ be the finite set of places of $k$, including the Archimedean places, such that $f$ and $\psi$ are unramified outside of $S$. For each place $v\notin S$, we would like to choose $\phi_v$ the characteristic function of $Y_{(r),(r+1)}(\mf{o}_{k_v})$.

Recall (section \ref{NilpOrbits}) that one has the parabolic subgroup $P=MU$ of $G$. Furthermore one has, in the notation of section \ref{WeightSpace}, \[ M = M_0\times M_1\times\cdots\times M_{r} \cong G(V_0) \times \GL(V_{-1})\times\cdots\times \GL(V_{-r}). \] Accordingly, for each $m\in M$, we write $m=m_0m_1\cdots m_r$. Recall also (cf. section \ref{NilpOrbitTransfer}) that $L=G(\ker f_0)\subseteq G(V_0)=M_0$. 

For each place $v\notin S$, by the Iwasawa decomposition, $\Phi$ is specified by its value on $L(k_v) \bs M(k_v)$. Now combining (suitable analogues of) the formulas (\ref{Graction}), (\ref{Mr+1action}) and (\ref{Gr-1action}) describing the action of the Weil representation, one readily sees that \begin{align}
    &(\omega_{(r),(r+1)}(m_0m_1\cdots m_r)\phi_v)(f_r)(f_{-r})\cdots (f_1)(f_{-1})(f_0) \\
    = &\nu(m) \cdot \phi_v(f_r m_r)(f_{-r} m_r)\cdots (f_1 m_1)(f_{-1} m_1)(f_0 m_0). \nonumber
\end{align}

Observe that, for each $1\le j\le r$, under the identification $M_j\cong \GL(V_{-j})$, the element $m_j$ is effectively acting on $f_j\in \Hom(V_j, V'_{j+1})$ by its matrix inverse (transpose), and acting on $f_{-j}\in \Hom(V_{-j}, V'_{-(j-1)})$ by itself. 

Furthermore we may WLOG assume we have chosen $f_j$ and $f_{-j}$, and suitable bases, such that they are represented by identity matrices (or matrices with only 1s in some diagonal entries, and 0s in all other entries), cf. \cite[Lemma 3.4]{Zh}.  

Finally, note also that, as in section \ref{BaseCase}, $L$ is the stabiliser of $f_0$ under the $M_0$-action.

Since $\phi_v$ is the characteristic function of $Y_{(r),(r+1)}(\mf{o}_{k_v})$, one therefore sees immediately that $\Phi$, and hence the entire integrand, is supported on \[ L\bs M_0(\mf{o}_{k_v}) \times \GL(V_{-1})(\mf{o}_{k_v})\times\cdots\times \GL(V_{-r})(\mf{o}_{k_v}) \] for $\mf{o}_{k_v}$ the ring of integers of $k_v$.

Therefore, with this choice of $\phi$ and hence $\Phi$, analogously to \cite[p.2719]{GS}, the integral reduces to an integration over (groups with coefficients in) $\A_S$, and one may then use as above that $\Phi$ is an arbitrary Schwarz function on $LU\bs G$ to complete the argument. 

\vskip 5pt

The argument in the general case is similar, since the main point is really the support of the Schwarz function, and hence the integrand, at the unramified places $v\not\in S$. For the remaining places $v\in S$, one may multiply $\Phi(g)$ by an arbitrary Schwarz function on $LU\bs G$. In summary, one has

\begin{thm}\label{KeyTheoremConverse}
Suppose $\gamma'$ is in the image of the moment map $\phi'$. Let $\pi$ be a cuspidal automorphic representation of $G$ and $\tau'$ a cusp form on $L'$. If $P_{\gamma,\overline{\psi}}( \cdot, \cdot  ,\theta_{\gamma,\gamma'}(\cdot, \tau' ))$ is not identically zero on $\pi$, then $P_{\gamma',\psi}(\theta(\cdot,\cdot), \cdot, \tau')$ is not identically zero on $\Theta(\pi)$. 
\end{thm}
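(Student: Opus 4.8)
The plan is to deduce \thmref{KeyTheoremConverse} from the period identity \eqnref{KeyResult} of \thmref{thm:KeyResult}, by showing that the right-hand side can be made non-zero through a careful choice of the Schwarz function $\phi$ entering $\theta(\phi,f)$. First I would reduce to the even case treated in the excerpt: there \eqnref{KeyResult} simplifies to \eqnref{KeyResultSimpleIntegral}, namely $P_{\gamma',\psi}(\theta(\phi,f)) = \int_{LU(\A)\bs G(\A)} \Phi(g)\cdot \overline{P_{\gamma,\overline\psi}(g\cdot f)}\mathop{dg}$, where $\Phi(g) = (\omega_{(r),(r+1)}(g)\phi)(f_r)(f_{-r})\cdots(f_0)$. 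By hypothesis $P_{\gamma,\overline\psi}(\cdot)$ is not identically zero on $\pi$, so after translating $f$ we may assume $P_{\gamma,\overline\psi}(f)\ne 0$, hence $g\mapsto \overline{P_{\gamma,\overline\psi}(g\cdot f)}$ is non-vanishing and (by continuity) of constant sign/argument on an open neighbourhood $\mathcal{V}$ of the identity in $LU(\A)\bs G(\A)$. The goal is then to produce a $\phi$ for which $\Phi$ is supported in, and positive on, (a translate of) $\mathcal V$, forcing the integral to be non-zero.

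The core of the argument is controlling the support of $\Phi$ at the places $v\notin S$, where $S$ is the finite set of places (including Archimedean ones) outside of which $f$ and $\psi$ are unramified. Following the discussion in the excerpt, I would take $\phi = \otimes_v \phi_v$ with $\phi_v$ for $v\notin S$ the characteristic function of $Y_{(r),(r+1)}(\mathfrak o_{k_v})$. Using the Iwasawa decomposition $G(k_v) = P(k_v)K_v$ and the explicit Weil representation formulas \eqnref{Graction}, \eqnref{Mr+1action} and \eqnref{Gr-1action}, one computes that for $m = m_0 m_1\cdots m_r \in M(k_v)$, $\Phi(m)$ equals $\nu(m)$ times $\phi_v$ evaluated at the tuple obtained by acting by the $m_j$ on the $f_j, f_{-j}$. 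Choosing bases (using \cite[Lemma 3.4]{Zh}) so the $f_j, f_{-j}$ are represented by (partial) identity matrices, and recalling from \secref{BaseCase} that $L$ stabilises $f_0$ under the $M_0$-action, one sees $\Phi$ is supported on $L\bs M_0(\mathfrak o_{k_v})\times \GL(V_{-1})(\mathfrak o_{k_v})\times\cdots\times\GL(V_{-r})(\mathfrak o_{k_v})$, i.e. on the image of $K_v$. Thus the $v\notin S$ components of $\Phi$ are just (positive multiples of) characteristic functions of the maximal compact, and the integral over $LU(\A)\bs G(\A)$ collapses — just as on \cite[p.2719]{GS} — to an integral over $LU(\A_S)\bs G(\A_S)$.

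It then remains to arrange the places $v\in S$. Here I would invoke the local surjectivity statement: by \cite[Proposition 6.5]{GZ} (the analogue of \cite[Proposition 7.1]{CG}), as $\phi_v$ ranges over $\S(Y_{(r),(r+1)}(k_v))$ the function $g\mapsto (\omega_{(r),(r+1)}(g)\phi_v)(f_r)(f_{-r})\cdots(f_0)$ ranges over \emph{all} of $\S(LU(k_v)\bs G(k_v))$. Hence over $\A_S$ we may take $\Phi$ to be an arbitrary Schwarz function on $LU(\A_S)\bs G(\A_S)$; choosing it to be a non-negative bump function supported in (the $S$-component of the preimage of) $\mathcal V$ and not vanishing there makes the integrand have constant argument and non-zero, so the integral is non-zero, giving $P_{\gamma',\psi}(\theta(\phi,f))\ne 0$. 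For the general (not necessarily even) case the same strategy applies to \eqnref{KeyResult}: the extra theta series $\theta_{-\gamma}$, $\theta_{\gamma,\gamma'}$ and the cusp form $\tau'$ on $L'$ only affect the places in $S$, where we again have enough freedom, while the support computation at $v\notin S$ is unchanged; one multiplies $\Phi(g)$ by an arbitrary Schwarz function on $LU\bs G$ to localise near a point where $P_{\gamma,\overline\psi}(\cdot,\cdot,\theta_{\gamma,\gamma'}(\cdot,\tau'))$ is non-zero. The main obstacle is precisely that $LU\bs G$ is not closed in $\Hom(V,V')$, so one cannot argue globally that $\Phi$ is arbitrary; the whole point is that the unramified computation pins down the support and reduces everything to the finite set $S$, where the local results of \cite{GZ} do give the needed flexibility.
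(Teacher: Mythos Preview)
Your proposal is correct and follows essentially the same approach as the paper: reduce first to the even case via \eqnref{KeyResultSimpleIntegral}, choose $\phi_v$ to be the characteristic function of $Y_{(r),(r+1)}(\mathfrak{o}_{k_v})$ at $v\notin S$, use the Iwasawa decomposition together with \eqnref{Graction}, \eqnref{Mr+1action}, \eqnref{Gr-1action} and the normalisation of the $f_j$ to pin down the support of $\Phi$ on the integral points, thereby reducing the integral to one over $\A_S$ (as in \cite[p.2719]{GS}), and then invoke the local surjectivity of \cite[Proposition 6.5]{GZ} at the places in $S$ to choose $\Phi$ as a bump function; the general case is handled by the same unramified support computation. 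You have even identified the key obstruction (that $LU\bs G$ is not closed in $\Hom(V,V')$) and its resolution exactly as the paper does.
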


\vskip 5pt

\section{Examples}\label{sec:Examples}

In this section, we give several examples to illustrate how the results above may be commonly used to relate certain well-known periods. 

\subsection{Bessel / Fourier-Jacobi to Whittaker}

Suppose $(G, G') = (\O_{2n}, \Sp_{2m})$ with $n\ge m$, and $\gamma'$ corresponds to a regular nilpotent orbit in $G'$, with corresponding partition $\lambda' = [2m]$. Then $\gamma$ has corresponding partition $\lambda = [2m-1, 1^{2n-2m+1}]$, and we have $L=\O(V_{new})\cong \O_{2n-2m+1}$ for the subspace $V_{new}\subseteq V$ of dimension $2n-2m+1$. The $\gamma$-period $P_{\gamma}$ on $\pi$ corresponds to the Bessel period on $\pi$ \cite{GGP}. 

Therefore, in this case, we are relating the Bessel period on $\pi$ to the Whittaker period on $\Theta(\pi)$. This is essentially the case considered in \cite[Section 7]{GW}, with several more very similar cases considered there as well. For example, one may similarly relate the Fourier-Jacobi period on $\Sp_{2n}$ to the Whittaker period on $\O_{2m}$. 

\subsection{Bessel / Fourier-Jacobi to Bessel / Fourier-Jacobi}

From the preceding example (cf. also \cite[Section 7]{GW}), we see that partitions of the general form $\lambda = [k, 1^{a}]$ (for $k>1$), or the so-called `hook-type' partitions, correspond either to Bessel or Fourier-Jacobi periods. 

It is easy to see that if $\gamma'$ is chosen to have `hook-type' partition $\lambda' = [k, 1^{a}]$ (and is in the image of the moment map $\phi'$), then $\gamma$ will also have `hook-type' partition $\lambda= [k-1,1^b]$ (for some $b$). If $k\ge 3$, $a\ge 2$ and $b\ge 2$, then one sees that we are able to relate a Bessel / Fourier-Jacobi period to another Bessel / Fourier-Jacobi period in a specific way. 

If $k=2$, then $G'$ must be symplectic and $G$ must be orthogonal, and the $\gamma$-period on $G$ is the reductive Bessel period on $G$ (i.e. the subspace $V_{new}\subseteq V$ is of codimension 1). 

\subsection{Whittaker to Bessel / Fourier-Jacobi}

If, in the preceding section, $b=0$ ($G$ symplectic) or 1 ($G$ orthogonal), then one sees that we are relating a Whittaker period on $\pi$ to a Bessel or Fourier-Jacobi period on $\Theta(\pi)$.

\subsection{Shalika to reductive periods}

Finally, suppose $\gamma'$ has corresponding partition $\lambda' = [2^a]$ for some $a$, then the $\gamma'$-period on $G'$ is the so-called Shalika period. Furthermore $\gamma$ must correspond to the trivial partition $\lambda = [1^{a+b}]$ for some $b$, with $L\cong G(V_{new})$ and $V_{new}$ of dimension $b$. 

This case has been used in various settings to relate the Shalika period on $\Theta(\pi)$ to the reductive $G(V_{new})$-period on $\pi$. When $a=b$ for instance, we are relating the Shalika period on $\Theta(\pi)$ to the so-called linear or Friedberg-Jacquet period on $\pi$.

\vskip 15pt

\end{document}